 \newif\ifhrule\hrulefalse
 \CheckCommand*\refstepcounter[1]{\stepcounter{#1}%
  \protected@edef\@currentlabel
  {\csname p@#1\endcsname\csname the#1\endcsname}%
 }
 \renewcommand*\refstepcounter[1]{\stepcounter{#1}%
 \protected@edef\@currentlabel
  {\csname p@#1\expandafter\endcsname\csname the#1\endcsname}%
 }
 \def\labelformat#1{\expandafter\def\csname p@#1\endcsname##1}
 \DeclareRobustCommand\Ref[1]{\protected@edef\@tempa{\ref{#1}}%
  \expandafter\MakeUppercase\@tempa
 }
 \newcommand{\numberlike}[2]{%
  \expandafter\def\csname c@#1\endcsname{%
   \expandafter\csname c@#2\endcsname}%
 }
 \def\DefaultNumberTheoremWithin{section}
 \theoremstyle{plain}
 \newtheorem{Lemma}{Lemma}
  \numberwithin{Lemma}{\DefaultNumberTheoremWithin}
  \numberwithin{Claim}{\DefaultNumberTheoremWithin}
 \newtheorem{Theorem}{Theorem}
  \numberwithin{Theorem}{\DefaultNumberTheoremWithin}
 \newtheorem{Corollary}{Corollary}
  \numberwithin{Corollary}{\DefaultNumberTheoremWithin}
 \newtheorem{Proposition}{Proposition}
  \numberwithin{Proposition}{\DefaultNumberTheoremWithin}
   \numberwithin{Conjecture}{\DefaultNumberTheoremWithin}
 \theoremstyle{definition}
   \numberwithin{Definition}{\DefaultNumberTheoremWithin}
 \theoremstyle{definition}
   \numberwithin{Question}{\DefaultNumberTheoremWithin}
 \theoremstyle{definition}
   \numberwithin{Problem}{\DefaultNumberTheoremWithin}
 \theoremstyle{remark}
 \newtheorem{Remark}{Remark}
   \numberwithin{Remark}{\DefaultNumberTheoremWithin}
 \theoremstyle{remark}
   \numberwithin{Example}{\DefaultNumberTheoremWithin}
   \numberwithin{Case}{Lemma}
   \numberwithin{Step}{Lemma}
 \def\eqref{\ref}
 \newcommand{\mb}{\mathbb}
\newcommand{\cM}{\mathcal{M}}
 \newcommand{\KK}{\mb{K}}
 \newcommand{\iin}{\mathrm{in}}
 \def\a{\alpha}
 \def\b{\beta}
 \def\I{\mathcal{I}}
 \newcommand{\reg}{\mathrm{reg}}
\begin{document}
\title{A Gorenstein simplicial complex for symmetric minors}

%July2014 VERSION

 \author{Aldo Conca}
   \address{University of Genova \\
       DIMA - Dipartimento di Matematica \\
       Via Dodecaneso, 35 \\
       16146 Genova\\
       Italy}
   \email{conca@dima.unige.it}

 \author{Emanuela de Negri}
   \address{University of Genova \\
       DIMA - Dipartimento di Matematica \\
       Via Dodecaneso, 35 \\
       16146 Genova\\
       Italy}

   \email{denegri@dima.unige.it}

 \author{Volkmar Welker}
   \address{Fachbereich Mathematik und Informatik\\
       Philipps-Universit\"at \\
       35032 Marburg\\
       Germany}
   \email{welker@mathematik.uni-marburg.de}

 \begin{abstract}
   We show that the ideal generated by the $(n-2)$ minors of a general 
   symmetric $n$ by $n$ matrix has an initial ideal that is the Stanley-Reisner
   ideal of the boundary complex of a simplicial polytope and has the
   same Betti numbers. 
 \end{abstract}

 \thanks{The third author was partially supported by MSRI}

 \maketitle

 \section{Introduction}
 Let $I$ be a homogeneous ideal in a polynomial ring $T=\KK[x_1,\ldots,x_n]$ over a field $\KK$. 
 Assume that $I$ is  Gorenstein,  i.e. the quotient ring $T/I$  is Gorenstein. 
 The general question whether $I$ has a (possibly square free) Gorenstein initial ideal  
 has been discussed recently by several authors for classical families of ideals, see \cite{Athanasiadis,BrunsRomer,ConcaHostenThomas2006,JonssonWelker2007,
 PylyavskyyPetersenSpeyer2010,ReinerWelker,SantosStumpWelker2014,SollWelker2009}.

 In particular, a positive answer is given for important classes of classical ideals: ideals of minors \cite{SollWelker2009}, ideals of Pfaffians \cite{JonssonWelker2007}
 and Pl\"ucker relations \cite{ReinerWelker, PylyavskyyPetersenSpeyer2010,SantosStumpWelker2014}.
 In these examples the associated Gorenstein initial ideals are actually square-free and the corresponding simpicial complexes have  a beautiful combinatorial descriptions. 
 Indeed,  they provide a link to the theory of (multi)-associahedra and to generalized cluster complexes (see \cite{CellabosLabbeStump}).

  Let $S=\KK [x_{ij} ~|~ 1 \leq i \leq j \leq n]$ be   the polynomial ring in the variables $x_{ij}$ over a field $\KK$.
  For $1 \leq j < i \leq n$ we set $x_{ji}=x_{ij}$ and consider
  the generic $n\times n$ symmetric matrix $X = (x_{ij})_{1 \leq i,j \leq n}$.
  For $2 \leq t \leq n$ we denote by $I_t$  the ideal generated by the $t$-minors of $X$.
  It is known that $S/I_t$ is a Cohen-Macaulay normal domain and that it is Gorenstein if and only if $n-t$ is even, see \cite{Goto,Kutz}.

  It is proved in \cite{Conca1994} and in \cite{SturmfelsSullivant} that the $t$-minors of $X$ are a Gr\"obner bases with respect
  to the lexicographic order induced by $$x_{11}>x_{12}>\dots>x_{1n}>x_{22}>\dots>x_{nn}.$$ The corresponding initial ideal is square-free and
  Cohen-Macaulay and it is even the Stanley-Reisner ideal of a shellable simplicial complex. 
  But, apart from few exceptions ($t=n$),   these initial ideals are not Gorenstein because they do not have the right number of ``cone points", see  \cite{ConcaHostenThomas2006} for details. 
  
  On the other hand, a Gorenstein square free initial ideal of $I_2$ has been given in 
  \cite{ConcaHostenThomas2006} and (implicitly) in \cite{BrunsRomer}.
  
  The goal of this paper is to treat the case of the ideal $I_{n-2}$  of  minors of size $n-2$ of a symmetric matrix of variables of size $n\times n$. 
  In the remaining cases,  i.e. $2<t<n-2$ and $n-t$ even, we have not been  able to identify  a Gorenstein initial ideal for $I_t$. Indeed, we do not even have a guess. 
 
Returning to the case $t=n-2$,  we will actually prove that the minors of size $n-2$ of $X$  form a Gr\"obner basis of the ideal $I_{n-2}$ with respect to a suitable reverse  lexicographic order $\prec$ and that the corresponding initial ideal $\iin_{\prec}(I_{n-2})$ is square-free and Gorenstein.
  Furthermore, we will show in Section \ref{r-covered} that the simplicial complex associated to $\iin_{\prec}(I_{n-2})$ is the boundary  complex of a cyclic polytope.

  Typically  the Betti numbers of initial ideals are bigger than the Betti numbers of  the original ideal, and this happens also for determinantal ideals whose initial ideal is Gorenstein, see for instance the examples in \cite{ConcaHostenThomas2006}. 
  This behaviour can be explained  theoretically, at least for minors of order $2$, using the logarithmic bounds for the regularity of    a quadratic monomial ideals with first linear syzygies established in \cite{DaoHunekeSchweig}.

  In our case however,  it turns out that the Betti numbers of $I_{n-2}$ and $\iin_{\prec}(I_{n-2})$ actually coincide. The   reason is that the Betti numbers of a compressed graded Gorenstein $K$-algebra of even regularity just depend on the
  regularity and the codimension, see \ref{compressed}.  By observing that $S/I_{n-2}$ and $S/\iin_{\prec}(I_{n-2})$ are
  Gorenstein compressed of regularity $2(n-3)$ and codimension $6$ one concludes that $\beta_{ij}(S/I_{n-2})=\beta_{ij}(S/\iin_{\prec}(I_{n-2}))$.
  To sum up, the goal of this note is to prove the following

  \begin{Theorem}
    \label{main}
    Let $X$ be the generic  $n\times n$ symmetric matrix. Then there exists a reverse lexicographic term order $\prec$ such that:
    \begin{enumerate}
    \item  the $(n-2)$-minors of $X$ form a Gr\"obner basis of $I_{n-2}$;
    \item  $\iin_\prec(I_{n-2})$  is square-free and defines a Gorenstein ring;
    \item  $\beta_{ij}(I_{n-2})=\beta_{ij}(\iin_\prec(I_{n-2}))$ for all $i,j$;
    \item  $\iin_\prec(I_{n-2})$ is an iterated cone over the Stanley-Reisner ideal of the boundary complex of a $(2n-6)$-dimensional cyclic 
    polytope on $2n$ vertices.
    \end{enumerate}
  \end{Theorem}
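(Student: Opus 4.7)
The strategy is to construct the reverse lexicographic term order $\prec$ explicitly and then establish the four assertions by a single unified argument: identify a candidate monomial ideal $J$ with the combinatorial structure prescribed in (4), show that the leading terms of the $(n-2)$-minors lie in $J$, and deduce $\iin_\prec(I_{n-2})=J$ by a Hilbert function comparison. Since the target is the boundary of a $(2n-6)$-dimensional cyclic polytope on $2n$ vertices while $S$ has $\binom{n+1}{2}$ variables, exactly $n(n-3)/2$ of the $x_{ij}$ must play the role of cone points. I would first experiment in small cases ($n=4,5,6$) by computer algebra to identify which variables are cone points and in what order the remaining $2n$ ``essential'' variables should be placed; reverse lex is well-suited here because cone points should be the smallest variables, so that they do not appear in any minimal generator of the resulting initial ideal.

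Once $\prec$ has been fixed, let $J$ be the monomial ideal generated by the leading monomials of the $(n-2)$-minors of $X$. The inclusion $J\subseteq \iin_\prec(I_{n-2})$ is tautological, and Gr\"obner basis equality follows as soon as the two ideals have the same Hilbert function. Since
\[
  \dim_\KK (S/\iin_\prec(I_{n-2}))_d = \dim_\KK (S/I_{n-2})_d
\]
for every $d$, it suffices to prove $\dim_\KK (S/J)_d \le \dim_\KK (S/I_{n-2})_d$. Interpreting $J$ as the Stanley--Reisner ideal of an iterated cone over a putative cyclic polytope boundary, I would compute its $h$-vector from the Dehn--Sommerville relations for cyclic polytopes and match it against the known Hilbert series of $S/I_{n-2}$; both rings are Gorenstein of codimension $6$ and regularity $2(n-3)$, hence determined by the first $n-3$ entries of their $h$-vectors. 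This simultaneously yields (1), the cyclic polytope description (4), and the Gorenstein property in (2), the latter because the boundary complex of a simplicial polytope is Gorenstein* and coning preserves Gorensteinness.

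Part (3) then follows directly from the criterion referenced as \ref{compressed} in the introduction: the graded Betti numbers of a Gorenstein compressed graded $\KK$-algebra of even regularity depend only on its regularity and codimension. Both $S/I_{n-2}$ (by \cite{Goto,Kutz} and the classical formulas for symmetric minors) and $S/\iin_\prec(I_{n-2})$ are Gorenstein of regularity $2(n-3)$ and codimension $6$; their Hilbert functions coincide by (1), so if one is compressed the other is, whence $\beta_{ij}(I_{n-2})=\beta_{ij}(\iin_\prec(I_{n-2}))$ for all $i,j$.

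The main obstacle I anticipate is the combinatorial identification of the leading terms of the $(n-2)$-minors with the minimal non-faces of the cyclic polytope boundary. One must produce a clean description of these leading monomials (modulo the cone variables) and check that it matches the set of minimal non-faces of the proposed complex, so that the $h$-vector computation actually reproduces the Hilbert series of $S/I_{n-2}$. Engineering a term order so that this correspondence is transparent, rather than obscured by spurious leading terms, is the main creative step; once it is in place, the Hilbert function comparison, the Gorenstein conclusion, and the Betti number equality all follow from well-established machinery.
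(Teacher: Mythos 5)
Your proposal lays out the same high-level framework as the paper: place the would-be cone variables last in a reverse-lex order, identify the candidate initial ideal with the Stanley--Reisner ideal of an iterated cone over a cyclic-polytope boundary complex, conclude equality by an enumerative comparison, read off Gorenstein-ness from the polytope, and derive the Betti number equality from the compressed-Gorenstein criterion (\ref{compressed}). That is exactly the strategy of \ref{pr:initial2} together with \ref{le:cyclic}, \ref{le:nonface}, and the final paragraph of \ref{r-covered}. Two remarks are in order.

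First, the enumerative step you describe is heavier than necessary. You propose to compare the full Hilbert series via the Dehn--Sommerville relations. The paper instead invokes \ref{monomialinclusion}: for a containment $L\subseteq \iin_\prec(I_{n-2})$ of monomial ideals, if $L$ is the Stanley--Reisner ideal of a \emph{pure} complex and the two quotients have the same Krull dimension and multiplicity, then $L=\iin_\prec(I_{n-2})$. Since $\cM_{2n,n-3}$ is pure of the right dimension by \ref{le:cyclic}, and its facet count matches $e(S/I_{n-2})$ by \ref{co:n-2}, only two numbers need to be checked, not the whole $h$-vector.

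Second, and more importantly, your proposal defers precisely the part that carries all the mathematical weight: you neither construct the term order $\prec$ nor prove that every minimal nonface of the complex arises as the leading term of an $(n-2)$-minor. The paper spends essentially all of \ref{choice of leading terms} on this: the explicit order with $D\succ U\succ(\text{rest})$, the inductive expansion \ref{lem:small}--\ref{lem:special2}, and the culminating \ref{pr:initmon} showing that every squarefree monomial of degree $s$ in $D\cup U$ satisfying the ``no diagonal index collides with an off-diagonal index'' condition is a leading term of an $s$-minor. This is then matched against \ref{le:nonface}, which describes the minimal nonfaces of $\cM_{2n,n-3}$ as exactly the $(n-2)$-subsets of the $2n$-cycle containing no edge. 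Without an argument of this kind the inclusion $L\subseteq \iin_\prec(I_{n-2})$ is not established and the Hilbert-function comparison has nothing to compare. Treating this as ``the main creative step'' to be filled in by computer experiments is an accurate self-assessment, but it leaves a genuine gap at the center of the proof.
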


 \section{Generalities}
   \label{general}
   In the following theorem we collect important results about the determinantal ring $S/I_t$ of a generic symmetric matrix $X$ of size
   $n\times n$  proved by Kutz \cite{Kutz}, Goto \cite{Goto},  Harris and Tu \cite{HarrisTu} and Conca \cite{Conca1994,Conca1994-1,Conca1994-2}.

   \begin{Theorem}
    \label{pr:basic}
    The ring $S/I_t$ is a Cohen-Macaulay normal domain. It is Gorenstein if and only if $n-t$ is even.
    Krull dimension, multiplicity, Castelnuovo-Mumford regularity and $a$-invariant of $S/I_t$ are given by the following formulas:
    $$\begin{array}{ll}
        \dim(S/I_t)=\frac{(2n+2-t)(t-1)}{2} &
         e(S/I_t)=\displaystyle{\prod_{a=0}^{n-t} }\frac{ \binom{n+a}{t+2a-1}}{ \binom{2a+1}{a}} \\ \\
         \reg(S/I_t)=\left\{
         \begin{array}{rl}
           \frac{(n+2-t)(t-1)}{2} & \mbox{ if $n-t$ is even}\\ \\
           \frac{(n+1-t)(t-1)}{2} & \mbox{ if $n-t$ is odd}
        \end{array}
        \right. &
        a(S/I_t)=\left\{
        \begin{array}{ll}
          -\frac{n(t-1)}{2} & \mbox{ if $n-t$ is even}\\ \\
          -\frac{(n+1)(t-1)}{2} & \mbox{ if $n-t$ is odd}
        \end{array}
        \right.
     \end{array}$$
   \end{Theorem}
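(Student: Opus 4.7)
This statement is a compendium of classical results, so my plan is to reduce each assertion to the appropriate tool from the determinantal-ideal literature rather than re-deriving everything from scratch. The unifying idea is that $S/I_t$ carries an ASL (algebra-with-straightening-laws) structure on the poset of symmetric minors; once that structure is in place most ring-theoretic and numerical invariants follow from general ASL techniques specialized to the symmetric case.

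First I would deduce the Cohen-Macaulay and domain properties from the ASL/standard-monomial description: shellability of the relevant poset of symmetric minors yields Cohen-Macaulayness of $S/I_t$, while the irreducibility of the variety of symmetric $n \times n$ matrices of rank $\le t-1$ (as the closure of the image of the map $A \mapsto A A^\top$) gives the domain property. Normality follows from Serre's $(R_1)+(S_2)$ criterion applied to the explicit singular locus (symmetric matrices of rank $\le t-2$, which has codimension at least $2$ in $V(I_t)$). For the Gorenstein dichotomy, I would compute the canonical module $\omega_{S/I_t}$ in the Goto style, identifying it with an ideal generated by a specific family of symmetric minors; this ideal turns out to be principal precisely when $n-t$ is even, giving the stated equivalence.

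For the numerical invariants the plan is to pass to the Hilbert series. The Krull dimension I would obtain by parameter count: a generic rank $t-1$ symmetric matrix is $A A^\top$ for an $n\times (t-1)$ matrix $A$, giving $n(t-1) - \binom{t-1}{2}$ parameters after accounting for the action of the orthogonal group, which simplifies to $(2n+2-t)(t-1)/2$. The multiplicity formula is the Harris--Tu degree formula, derivable from Porteous or from lattice-path enumeration of standard monomials of degree $\le \dim(S/I_t)$. Finally, regularity and $a$-invariant I would read off from the closed form of the Hilbert series (available through the ASL and work of Conca): the $a$-invariant is minus the degree of the denominator minus the degree of the $h$-polynomial, and splitting the computation according to the parity of $n-t$ gives the two cases in the statement, with the $h$-polynomial symmetric exactly in the Gorenstein case.

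The main technical obstacle is the last step: extracting the exact regularity and $a$-invariant from the Hilbert series requires a careful manipulation and the parity split between the Gorenstein and non-Gorenstein cases must be tracked consistently through the computation; the remaining ingredients (Cohen-Macaulay, normal, domain, dimension, multiplicity) are relatively classical and can be invoked from \cite{Kutz,Goto,HarrisTu,Conca1994,Conca1994-1,Conca1994-2} once the ASL framework is set up.
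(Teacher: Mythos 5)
The paper offers no proof of this theorem at all: it is explicitly presented as a compendium of results with attributions to Kutz, Goto, Harris--Tu, and Conca, and the text immediately preceding the statement makes this clear. Your proposal is therefore doing more than the paper does, and what you sketch is a faithful roadmap through those sources: ASL/standard-monomial structure for Cohen--Macaulayness (Kutz via invariant theory, Conca via straightening laws and shellability), the parameterization $A \mapsto A A^\top$ for irreducibility and the dimension count (your arithmetic $n(t-1) - \binom{t-1}{2} = \frac{(2n+2-t)(t-1)}{2}$ checks out), Serre's criterion for normality, Goto's canonical-module computation for the Gorenstein dichotomy, the Harris--Tu degree formula for multiplicity, and the Hilbert series from Conca's symmetric-ladder work for regularity and the $a$-invariant. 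One small logical point worth tightening if you ever wrote this out in full: irreducibility of the variety gives that $\sqrt{I_t}$ is prime, but to conclude $S/I_t$ is a domain you also need $I_t$ radical, which in the literature comes either from the ASL/Gr\"obner basis description or from the localization-induction argument, not from the parameterization alone. As it stands, though, your proposal correctly identifies the ingredients and where they live, which is all the paper itself does.
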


   In this paper we consider the case of the ideals of minors of size $t=n-2$. As a special case of \ref{pr:basic} we have:

  \begin{Corollary}
    \label{co:n-2} The ring $S/I_{n-2}$ is a Gorenstein normal domain.
    Its dimension, multiplicity, regularity and $a$-invariant are:
    $$
    \begin{array}{llll}
      \dim(S/I_{n-2})=\frac{(n+4)(n-3)}{2} & & &e(S/I_{n-2})={n+2 \choose 6} +{n+3 \choose 6} \\ \\
      \reg(S/I_{n-2})=2(n-3)  &  & &a(S/I_{n-2})=-\frac{n(n-3)}{2}.
    \end{array}
    $$
  \end{Corollary}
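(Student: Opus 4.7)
The Corollary is a direct specialization of Theorem~\ref{pr:basic} to the case $t=n-2$, so the plan is essentially to substitute and simplify. Since $n-t = 2$ is even, the ``$n-t$ even'' clauses of Theorem~\ref{pr:basic} apply, immediately giving that $S/I_{n-2}$ is Gorenstein (in addition to being Cohen--Macaulay and a normal domain).

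For the dimension, regularity and $a$-invariant the substitution is mechanical: plugging $t=n-2$ into $\frac{(2n+2-t)(t-1)}{2}$ gives $\frac{(n+4)(n-3)}{2}$; plugging into $\frac{(n+2-t)(t-1)}{2}$ gives $\frac{4(n-3)}{2}=2(n-3)$; and plugging into $-\frac{n(t-1)}{2}$ gives $-\frac{n(n-3)}{2}$. No clever argument is needed here.

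The only step that requires a brief calculation is the multiplicity. The product in Theorem~\ref{pr:basic} now runs over $a=0,1,2$, and I would evaluate each factor separately:
\[
\frac{\binom{n}{n-3}}{\binom{1}{0}} = \binom{n}{3}, \quad
\frac{\binom{n+1}{n-1}}{\binom{3}{1}} = \frac{n(n+1)}{6}, \quad
\frac{\binom{n+2}{n+1}}{\binom{5}{2}} = \frac{n+2}{10}.
\]
Multiplying gives $\frac{n^2(n-1)(n-2)(n+1)(n+2)}{360}$. On the target side, expanding $\binom{n+2}{6}+\binom{n+3}{6}$ and pulling out the common factor $(n+2)(n+1)n(n-1)(n-2)/720$ leaves the bracket $(n-3)+(n+3)=2n$, producing the same expression. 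This matches, concluding the proof.

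The main (and really only) obstacle is this last algebraic identity; it is purely computational, and the cleanest way to dispose of it is to factor $(n+2)(n+1)n(n-1)(n-2)$ out of both $\binom{n+2}{6}$ and $\binom{n+3}{6}$ so the verification reduces to $(n-3)+(n+3)=2n$ and a comparison of the denominators $720$ and $360$.
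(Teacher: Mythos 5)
Your proposal is correct and matches the paper's (implicit) argument: the paper states the corollary as ``a special case of Theorem~\ref{pr:basic}'' with no further proof, so your mechanical substitution of $t=n-2$ and the direct verification of the binomial identity $\binom{n}{3}\cdot\frac{n(n+1)}{6}\cdot\frac{n+2}{10}=\binom{n+2}{6}+\binom{n+3}{6}$ supply exactly what is being left to the reader.
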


  \begin{Remark}
    \label{compressed}
    Let $A$ be a Gorenstein graded $K$-algebra of even Castelnuovo-Mumford regularity $2s$ and codimension $c$.
    Then the $h$-polynomial $h(z)=\sum_{i=0}^{2s} h_iz^i$ satisfies the inequality $h_i\leq \binom{c-1+i}{c-1}$
    for $i=0,\dots,s$. Since $h_{2s-i}=h_i$, this gives an upper bound for the multiplicity of $A$ only in terms
    of $c$ and $s$:
    $$e(A)\leq \binom{c-1+s}{c}+\binom{c+s}{c}$$
    and $A$ is said to be compressed if  the equality holds. In other words, $A$ is compressed if its h-polynomial is given by
    $$\sum_{i=0}^{s} \binom{c-1+i}{c-1} z^i+\sum_{i=0}^{s-1} \binom{c-1+i}{c-1} z^{2s-i}.$$
    A simple computation shows that the minimal free resolution of a compressed Gorenstein $K$-algebra of even regularity is pure
    (i.e. ~only one shift in each homological position) and hence its Betti numbers just depend on $s$ and $c$. Explicit expressions
    for the Betti numbers can be worked out, they can be found for example in \cite{Boji,Iarrobino,Schenzel}.
  \end{Remark}

  We see from \ref{co:n-2} that $S/I_{n-2}$ is compressed of even regularity.
  We obtain the following expressions for the $h$-polynomial and the Betti numbers of $S/I_{n-2}$.

  \begin{Proposition}
    \label{betti}
    The codimension of $S/I_{n-2}$ is $6$, its $h$-polynomial is
    $$\sum_{i=0}^{n-3} \binom{ 5+i}{5} z^i+\sum_{i=0}^{n-4} \binom{ 5+i}{5} z^{2(n-3)-i}$$
    and its non-zero Betti numbers are: $\beta_{00}=\beta_{6,2n}=1$ and
    $$\beta_{6-i,n+3-i}=\beta_{i,n-3+i}=
      \left\{
      \begin{array}{ll}
        \frac{(n+1)n^2(n-1)}{12}  & \mbox{if \   }i=1\\ \\
        \frac{(n+2)n^2(n-2)}{3}    & \mbox{if \   } i=2  \\ \\
        \frac{(n+2)(n+1)(n-1)(n-2)}{2} & \mbox{if \   } i=3
      \end{array}\  \  .
      \right.
    $$
  \end{Proposition}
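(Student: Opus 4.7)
The argument proceeds in three short steps, each driven by Remark \ref{compressed}.

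First, the codimension of $S/I_{n-2}$ is found by subtracting the dimension formula from Corollary \ref{co:n-2} from the number of variables $\binom{n+1}{2}$ of $S$:
$$\mathrm{codim}(S/I_{n-2}) \;=\; \frac{n(n+1)}{2} - \frac{(n+4)(n-3)}{2} \;=\; 6.$$

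Second, to pin down the $h$-polynomial I verify that $S/I_{n-2}$ is \emph{compressed} in the sense of Remark \ref{compressed}. Setting $c=6$ and $s=n-3$, so that $2s=\reg(S/I_{n-2})$, the upper bound $\binom{c-1+s}{c}+\binom{c+s}{c} = \binom{n+2}{6}+\binom{n+3}{6}$ from the remark coincides with the multiplicity of $S/I_{n-2}$ recorded in Corollary \ref{co:n-2}. Hence $S/I_{n-2}$ is compressed of even regularity, and substituting $c$ and $s$ into Remark \ref{compressed} yields the stated $h$-polynomial.

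Third, Remark \ref{compressed} also asserts that the minimal free resolution of $S/I_{n-2}$ is pure. Let $0=d_0<d_1<\cdots<d_6$ denote its shifts. I claim these are $0,\,n-2,\,n-1,\,n,\,n+1,\,n+2,\,2n$: indeed $d_1=n-2$ because $I_{n-2}$ is generated by $(n-2)$-minors of $X$; $d_6=\reg(S/I_{n-2})+\mathrm{codim}=2(n-3)+6=2n$; Gorenstein self-duality gives $d_i+d_{6-i}=d_6=2n$, so in particular $d_3=n$; and strict monotonicity of a pure resolution then forces $d_2=n-1$, $d_4=n+1$, $d_5=n+2$. Feeding these shifts into the Herzog--K\"uhl formula for pure resolutions produces the Betti numbers $\beta_{1,n-2}$, $\beta_{2,n-1}$, $\beta_{3,n}$ displayed in the proposition, and the remaining $\beta_{6-i,n+3-i}$ follow by Gorenstein duality.

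The only genuinely nontrivial point is identifying all seven shifts from the available numerical data; once this is in place, the Herzog--K\"uhl formula reduces the Betti number computation to a direct numerical check, and everything else follows from Remark \ref{compressed} in combination with the invariants recorded in Corollary \ref{co:n-2}.
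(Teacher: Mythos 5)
Your argument is correct and follows the same route the paper takes: the paper does not give a proof of Proposition \ref{betti} at all, but simply states it as an immediate consequence of Corollary \ref{co:n-2} and Remark \ref{compressed}, delegating the explicit Betti number formulas to the cited references (Boij, Iarrobino, Schenzel). You fill in exactly the computation the paper leaves implicit, via Herzog--K\"uhl applied to the shifts of the pure resolution, and the numbers check out: with shifts $0,n-2,n-1,n,n+1,n+2,2n$ the Herzog--K\"uhl products reproduce $\frac{(n+1)n^2(n-1)}{12}$, $\frac{(n+2)n^2(n-2)}{3}$, $\frac{(n+2)(n+1)(n-1)(n-2)}{2}$ as claimed.

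One small slip in phrasing: you write that ``strict monotonicity'' forces $d_4=n+1$ and $d_5=n+2$. Monotonicity alone does pin down $d_2=n-1$ (since $n-2<d_2<n$), but it does not bound $d_4,d_5$ from above. What forces them is the Gorenstein duality $d_i+d_{6-i}=2n$ you already invoked: $d_4=2n-d_2=n+1$ and $d_5=2n-d_1=n+2$. Since you had already stated the duality relation, this is a misattribution rather than a gap, but it should be corrected for clarity.
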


 \section{The choice of the leading terms}
 \label{choice of leading terms}

   We define a term order on the monomials in $S$:

   \medskip

   Set \begin{eqnarray*}
      V & = & \{ x_{ij} ~|~1 \leq i \leq j \leq n \}, \\
      D & = & \{ x_{ii}~ |~1 \leq i \leq n\}, \\
      U & = & \{ x_{13},x_{24},\ldots, x_{n-2,n} \} \cup \{ x_{12}, x_{n-1,n} \}.
   \end{eqnarray*}

   Consider a termorder $\prec$  given by the reverse
   lexicographic order on $V$ with the variables ordered as follows:
   $$\underbrace{ x_{11} \succ x_{22} \succ \cdots \succ x_{nn}}_{D}\underbrace{\succ x_{13}
     \succ \cdots \succ x_{n-2,n} \succ x_{12} \succ x_{n-1,n}}_U \succ \mbox{all the other variables}. $$

   In the sequel, for $s$-elements subsets $\{\alpha_1,\ldots, \alpha_s\}$ and $\{\beta_1,\ldots, \beta_s\}$
   of $\{ 1, \ldots, n\}$ we write $[\alpha_1,\ldots, \alpha_s|\beta_1,\ldots, \beta_s]$ for the minor
   of the generic symmetric matrix $X$ defined by selecting rows $\{\alpha_1,\ldots, \alpha_s\}$ and 
   columns $\{\beta_1,\ldots, \beta_s\}$ in the given order. Note that we sometimes speak of the row or
   column of a minor by which we mean the row or column of the submatrix selected to compute the minor.  

   For classifying the leading monomials of the $(n-2)$-minors we first need some preparatory lemmas.

  \begin{Lemma}
     \label{lem:small}
     Let $1 \leq s \leq n-2$ and $\a_1 < \cdots < \a_s$ and $\b_1 < \cdots < \b_s$
     be two sequences of distinct indices such that $x_{\a_\ell,\b_\ell} \in U$ for $1 \leq \ell \leq s$.
     Then
     $$\iin_\prec [\a_1,\ldots \a_s | \b_1,\ldots, \b_s] = [\a_{1}|\b_{1}] \cdots [\a_s|\b_s].$$  
  \end{Lemma}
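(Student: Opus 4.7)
The plan is to use the Leibniz expansion
\[
[\alpha_1,\ldots,\alpha_s | \beta_1,\ldots,\beta_s] = \sum_{\sigma\in\mathfrak{S}_s}\mathrm{sgn}(\sigma)\prod_{\ell=1}^s x_{\alpha_\ell,\beta_{\sigma(\ell)}}
\]
and to show that the identity summand $m_{\mathrm{id}}:=\prod_\ell x_{\alpha_\ell,\beta_\ell}=[\alpha_1|\beta_1]\cdots[\alpha_s|\beta_s]$ is the unique $\prec$-maximum. By hypothesis, the support of $m_{\mathrm{id}}$ lies entirely in $U$.

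The key observation about $\prec$ is that every variable of $V\setminus(D\cup U)$ sits strictly below every variable of $U$ in the chosen order. Hence for two squarefree monomials of the same degree, one supported entirely in $U$ strictly dominates any that contains at least one variable of $V\setminus(D\cup U)$: the smallest variable in which they differ is then that outside variable, with exponent $0$ in the former and $\geq 1$ in the latter. It therefore suffices to establish the purely combinatorial claim that for every $\sigma\in\mathfrak{S}_s\setminus\{\mathrm{id}\}$, at least one factor $x_{\alpha_\ell,\beta_{\sigma(\ell)}}$ of $m_\sigma$ lies in $V\setminus(D\cup U)$.

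Recall that $x_{ij}\in D\cup U$ iff $|i-j|\leq 2$, with the further restriction that $|i-j|=1$ forces $\{i,j\}\in\{\{1,2\},\{n-1,n\}\}$. My plan to prove the combinatorial claim is to decompose $\sigma$ into disjoint cycles and analyze a single non-trivial cycle $(\ell_1,\ldots,\ell_k)$. For a transposition ($k=2$), the strict orderings $\alpha_{\ell_1}<\alpha_{\ell_2}$ and $\beta_{\ell_1}<\beta_{\ell_2}$ combined with $|\alpha_{\ell_i}-\beta_{\ell_i}|\leq 2$ force either $\alpha_{\ell_2}-\beta_{\ell_1}\geq 3$ or $\beta_{\ell_2}-\alpha_{\ell_1}\geq 3$, placing a swap entry outside $D\cup U$. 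For longer cycles, telescoping the signed differences around the cycle, $\sum_i(\alpha_{\ell_i}-\beta_{\sigma(\ell_i)})=\sum_i(\alpha_{\ell_i}-\beta_{\ell_i})$, together with the diagonal bound, reduces matters by an exchange argument to the transposition case.

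The main obstacle I anticipate is the careful handling of the two corner entries $x_{12}$ and $x_{n-1,n}$ of $U$, whose index gap is $1$ rather than $2$ and thereby breaks the otherwise uniform pentadiagonal shape of $D\cup U$. The hypothesis $s\leq n-2$ should be precisely what prevents any ``wrap-around'' configuration using these corner entries to keep a non-identity permutation inside $D\cup U$; I would conclude by enumerating the handful of configurations where $\{\alpha_\ell\}$ or $\{\beta_\ell\}$ meets $\{1,2,n-1,n\}$ and verifying each by direct inspection.
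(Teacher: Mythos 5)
Your strategy is genuinely different from the paper's. The paper proves the lemma by induction on $s$: it Laplace-expands along the last column, uses the inductive hypothesis on the $(s-1)$-minor appearing in the $i=s$ term, checks that $[\alpha_s|\beta_s]\in U$ while $[\alpha_i|\beta_s]\notin D\cup U$ for $i<s$, and lets revlex decide. You instead go straight to the Leibniz expansion and aim to show that the identity summand strictly $\prec$-dominates every other permutation summand in one stroke. Your revlex observation is correct and is exactly the right reduction: since every variable of $V\setminus(D\cup U)$ is $\prec$-smaller than every variable of $U$, a squarefree monomial with a factor outside $D\cup U$ is strictly $\prec$-smaller than a $U$-supported monomial of the same degree, so it suffices to prove that for every $\sigma\neq\mathrm{id}$ at least one factor $x_{\alpha_\ell,\beta_{\sigma(\ell)}}$ lies outside $D\cup U$.

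That combinatorial claim, however, is the whole content of the lemma once the revlex remark is in place, and you do not prove it: you sketch a plan (cycle decomposition, a transposition case, a telescoping identity and an unstated ``exchange argument'', plus a final enumeration near the corners $x_{12}$, $x_{n-1,n}$) and explicitly flag the corner analysis as an unresolved obstacle. The telescoping identity $\sum_i(\alpha_{\ell_i}-\beta_{\sigma(\ell_i)})=\sum_i(\alpha_{\ell_i}-\beta_{\ell_i})$ is true but by itself does not place any single factor outside $D\cup U$, so the promised reduction to the transposition case is not established. This is a genuine gap.

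The gap is fillable, and in fact more directly than your plan suggests, once one records the normalization $\alpha_\ell<\beta_\ell$ for all $\ell$ (also used implicitly in the paper when it writes $\alpha_1<\cdots<\alpha_s<\beta_s$). Pick any $\ell$ with $\sigma(\ell)>\ell$; one exists since $\sigma\neq\mathrm{id}$. Since the $\beta$'s are strictly increasing integers,
\[
\beta_{\sigma(\ell)}-\alpha_\ell\;\ge\;(\sigma(\ell)-\ell)+(\beta_\ell-\alpha_\ell)\;\ge\;2,
\]
with equality forcing $\sigma(\ell)=\ell+1$ and $\beta_\ell-\alpha_\ell=1$, i.e.\ $(\alpha_\ell,\beta_\ell)\in\{(1,2),(n-1,n)\}$. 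The case $(n-1,n)$ would force $\ell=s$, incompatible with $\sigma(\ell)=\ell+1\le s$. The case $(1,2)$ forces $\ell=1$ and $\beta_2=3$, hence $\alpha_2=2$ and $x_{\alpha_2\beta_2}=x_{23}\notin U$ (using $n\ge 4$, which follows from $2\le s\le n-2$), contradicting the hypothesis. Therefore $\beta_{\sigma(\ell)}-\alpha_\ell\ge 3$, so $x_{\alpha_\ell,\beta_{\sigma(\ell)}}\notin D\cup U$, and your revlex observation finishes the argument. So the approach you chose does work, but as submitted the key step is missing.
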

  \begin{proof}
    We prove the assertion by induction on $s$. The case $s =1$ is trivial and we can assume $s > 1$.

    Set $M=[\a_1,\ldots,\a_s|\b_1 ,\ldots, \b_s]$. 
    We expand $M$ along the $s$\textsuperscript{th} column:
    $$M=\sum_{i=1}^{s} \pm[\a_i \,|\,\b_s][\a_1,\ldots, \a_{i-1}, \a_{i+1} ,\ldots, \a_s|\b_1,\ldots, \b_{s-1}].$$
    By induction we have that $$\iin_\prec( [\a_1,\ldots,\a_{s-1} \, |\, \b_1, \ldots,\b_{s-1}])=x_{\a_1\b_1} \cdots x_{\a_{s-1} \b_{s-1}}.$$

    By $\a_1 < \cdots < \a_s < \b_{s}$ it follows that $\a_i \leq \b_s -2$ for
    $1 \leq i \leq s-1$ and equality can only hold if $i = s-1$ and $(\a_s,\b_s) = (n-1,n)$ and $(\a_{s-1},b_{s-1}) = (n-2,n)$.
    But this contradicts $\b_1 < \cdots < \b_s$ and hence $\a_i < \b_s -2$ for $1 \leq i \leq s-1$.
    This implies that $[\a_i\,|\,\b_s]\not\in D\cup U$ for every $i=1,\ldots,s-1$.
    Since $[\a_s|\b_s] \in U$ and $\iin_\prec( [\a_1,\ldots,\a_{s-1} \, |\, \b_1, \ldots,\b_{s-1}])$ is a monomial in $U$
    it follows from the choice of $\prec$ that $\iin_\prec(M)$ takes the desired form.
  \end{proof}

  \begin{Lemma}
     \label{lem:newlittle}
     Let $1 \leq r \leq s \leq n-2$. Assume we have two sequences 
     $$a_1,\ldots, a_r, \a_{r+1} , \cdots , \a_s \mbox{~and~} b_1,\ldots, b_r, \b_{r+1} , \cdots , \b_s$$
     of  distinct indices such that: 
     \begin{itemize}
        \item $x_{\a_\ell,\b_\ell} \in U$ for $r+1 \leq \ell \leq s$,
        \item $a_i < \b_j$ and $x_{a_i, \b_j} \not\in D\cup U$ for $1 \leq i \leq r$ and $r+1 \leq j \leq s$,
        \item $x_{\a_i, \b_j} \not\in D\cup U$ for $r+1 \leq i < j \leq s$,
        \item $\iin_\prec [a_1,\ldots, a_r|b_1 , \ldots, b_r]$ is a monomial in $U$.  
     \end{itemize}
     Then
     \begin{eqnarray*} 
       \iin_\prec [a_1,\ldots ,a_r,\a_{r+1} , \ldots \a_s | b_1,\ldots, b_r,\b_{r+1},\ldots, \b_s] & = & \\
       \iin_\prec [a_1,\ldots, a_r|b_1,\ldots, b_r] [\a_{r+1}|\b_{r+1}] \cdots [\a_s|\b_s]. &   & 
     \end{eqnarray*} 
  \end{Lemma}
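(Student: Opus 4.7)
The plan is to induct on $s$. When $s = r$, the claim reduces to the fourth hypothesis since the product $[\alpha_{r+1}\mid\beta_{r+1}]\cdots[\alpha_s\mid\beta_s]$ is empty. For $s > r$ I would Laplace-expand
$$M = [a_1,\ldots,a_r,\alpha_{r+1},\ldots,\alpha_s \mid b_1,\ldots,b_r,\beta_{r+1},\ldots,\beta_s]$$
along its last column (indexed by $\beta_s$), writing $M$ as a signed sum of $s$ terms of the form $x_{\cdot,\beta_s}\cdot M'$, where $M'$ is the complementary $(s-1)\times(s-1)$ subminor obtained by deleting some row. The four hypotheses of the lemma pass verbatim to any such $M'$ after deleting $\alpha_s$ and $\beta_s$, so the inductive hypothesis will be applicable to the summand that keeps rows $a_1,\ldots,a_r,\alpha_{r+1},\ldots,\alpha_{s-1}$.

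For that distinguished summand, the scalar factor is $x_{\alpha_s,\beta_s}$, which lies in $U$ by the first hypothesis, and by induction the remaining subminor has initial term $\iin_\prec[a_1,\ldots,a_r\mid b_1,\ldots,b_r]\cdot x_{\alpha_{r+1},\beta_{r+1}}\cdots x_{\alpha_{s-1},\beta_{s-1}}$, a monomial supported entirely on $U$. Multiplying by $x_{\alpha_s,\beta_s}\in U$ produces the candidate monomial for $\iin_\prec(M)$, still supported on $U$. In each of the other $s-1$ summands, the scalar factor is either $x_{a_k,\beta_s}$ for some $1\le k\le r$ or $x_{\alpha_k,\beta_s}$ for some $r+1\le k<s$, and the second, respectively third, hypothesis forces this variable to lie outside $D\cup U$; hence every monomial appearing in such a summand is divisible by a variable strictly smaller than every variable in $U$ under the order $\prec$.

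Because $\prec$ is reverse lexicographic and every variable outside $D\cup U$ sits below every variable in $U$, a degree-$s$ monomial supported entirely on $U$ strictly dominates any degree-$s$ monomial that contains a variable below $U$. Thus the candidate monomial outranks every monomial in the non-distinguished summands and, by monotonicity of the term order, every non-leading monomial in the distinguished summand, so no cancellation at the top can occur and $\iin_\prec(M)$ is exactly as asserted. The one conceptual step to keep straight is this last rev-lex comparison — that a monomial supported on larger variables beats one that introduces a smaller variable — which is already the key observation behind Lemma~\ref{lem:small} and requires no new idea here.
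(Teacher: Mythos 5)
Your proposal is correct and follows essentially the same route as the paper's proof: both induct on $s-r$ (equivalently $s$ with $r$ fixed), Laplace-expand along the last column, apply the inductive hypothesis to the summand obtained by deleting row $\alpha_s$, and observe that the cofactors $x_{a_i,\beta_s}$ and $x_{\alpha_i,\beta_s}$ of the remaining summands lie outside $D\cup U$, so in the chosen reverse lexicographic order those summands cannot contribute the leading term. The one difference is purely expository: you spell out the rev-lex comparison argument that a monomial supported on $U$ outranks a same-degree monomial containing a variable below $D\cup U$, whereas the paper simply invokes ``the choice of $\prec$.''
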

\begin{proof}
      We proceed by induction on $s-r$. 
      If $s = r$ then the assertion is trivially true. 
      Assume $r < s$. 
      Set $M = [a_1,\ldots ,a_r,\a_{r+1} , \ldots, \a_s | b_1,\ldots, b_r,\b_{r+1},\ldots, \b_s]$.
      We expand $M$ along the $s$\textsuperscript{th} column:
      \begin{eqnarray*} M & = & \sum_{i=1}^{r} \pm[a_i \,|\,\b_s][a_1,\ldots, a_{i-1}, a_{i+1} ,\ldots,a_r, \a_{r+1},\ldots, \a_s|b_1,\ldots, b_r, \b_{r+1},\ldots, \b_{s-1}]+ \\
                          &   & \sum_{i=r+1}^{s} \pm[\a_i \,|\,\b_s][a_1,\ldots, a_r, \a_{r+1}, \ldots,\a_{i-1},\a_{i+1} ,\ldots, \a_s|b_1,\ldots, b_r, \b_{r+1},\ldots, \b_{s-1}].
      \end{eqnarray*}
      For the summation index $i = s$ by induction we have
      \begin{eqnarray*} 
        \iin_\prec ( [\a_s \,|\,\b_s][a_1,\ldots, a_r, \a_{r+1}, \ldots,\a_{s-1}|b_1,\ldots , b_r, \b_{r+1},\ldots, \b_{s-1}] )& =  & \\
        \iin_\prec ([\a_s|\b_s] [a_1,\ldots, a_r|b_1,\ldots , b_r] [\a_{r+1}|\b_{r+1}] \cdots [\a_{s-1}|\b_{s-1}]).& & 
      \end{eqnarray*}
 
      which by our assumption on $\iin_\prec ([a_1,\ldots, a_r|b_1,\ldots , b_r])$ is a monomial in $U$.
      Note that $[a_i|\b_s]$ do not lie in $D \cup U$ 
     for $1 \leq i \leq r$, and  $[\a_i,\b_s]$ also do not lie in $D \cup U$ for $r+1 \leq i \leq s-1$. 
      Thus by the choice of $\prec$ it follows that 
      \begin{eqnarray*} 
         \iin_\prec (M) & = & [\a_s \,|\,\b_s][a_1,\ldots, a_r, \a_{r+1}, \ldots,\a_{s-1}|b_1,\ldots , b_r, \b_{r+1},\ldots, \b_{s-1}] \\
                      & = & \iin_\prec ([a_1,\ldots, a_r|b_1,\ldots, b_r] )[\a_{r+1}|\b_{r+1}] \cdots [\a_s|\b_s].
      \end{eqnarray*}
  \end{proof}
 
  The same proof but expanding along the first row yields:

  \begin{Lemma}
     \label{lem:newlittle2}
     Let $1 \leq q \leq s \leq n-2$. Assume we have two sequences $$\a_1,\ldots, \a_{s-q} ,a_{s-q+1} , \dots , a_s 
     \mbox{~and~} \b_1,\ldots, \b_{s-q}, b_{s-q+1} , \cdots , b_s$$
     of  distinct indices such that: 
     \begin{itemize}
        \item $x_{\a_\ell,\b_\ell} \in U$ for $1 \leq \ell \leq s-q$,
        \item $\a_i < b_j$ and $x_{\a_i, b_j} \not\in D\cup U$ for $1\leq i \leq s-q$ and $s-q+1 \leq j \leq s$,
        \item $x_{\a_i, \b_j} \not\in D\cup U$ for $1 \leq i < j \leq s-q$,
        \item $\iin_\prec [a_{s-q+1} , \dots , a_s|b_{s-q+1} , \cdots , b_s]$ is a monomial in $U$.  
     \end{itemize}
     Then
     \begin{eqnarray*} 
     \iin_\prec [\a_1,\ldots, \a_{s-q},a_{s-q+1},\ldots, a_s|\b_1,\ldots, \b_{s-q},b_{s-q+1},\ldots, b_s] & = & \\ 
        \,[\a_{1}|\b_{1}] \cdots [\a_{s-q}|\b_{s-q}] \iin_\prec [a_{s-q+1},\ldots, a_s|b_{s-q+1},\ldots, b_s]. &   & 
     \end{eqnarray*} 
  \end{Lemma}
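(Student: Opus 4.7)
The plan is to mirror the proof of \ref{lem:newlittle}, replacing Laplace expansion along the last column by expansion along the first row. I proceed by induction on $s-q$; the base case $s-q=0$ is exactly the fourth hypothesis.

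For the inductive step, write $M$ for the minor on the left-hand side and expand along the row indexed by $\alpha_1$:
\begin{equation*}
M \;=\; \sum_{j=1}^{s-q} \pm\, [\alpha_1 \mid \beta_j]\, M_j' \;+\; \sum_{j=s-q+1}^{s} \pm\, [\alpha_1 \mid b_j]\, M_j'',
\end{equation*}
where $M_j'$ and $M_j''$ are the complementary $(s-1)$-minors on the remaining rows $\alpha_2,\ldots,\alpha_{s-q},a_{s-q+1},\ldots,a_s$. The distinguished term is the $j=1$ summand of the first sum: its complementary minor is
\begin{equation*}
[\alpha_2,\ldots,\alpha_{s-q},a_{s-q+1},\ldots,a_s \mid \beta_2,\ldots,\beta_{s-q},b_{s-q+1},\ldots,b_s],
\end{equation*}
which still satisfies all four hypotheses with parameters $(s-1,q)$. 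By induction its leading monomial equals $[\alpha_2\mid\beta_2]\cdots[\alpha_{s-q}\mid\beta_{s-q}]\,\iin_\prec[a_{s-q+1},\ldots,a_s\mid b_{s-q+1},\ldots,b_s]$, which is a monomial in $U$. Multiplying by $[\alpha_1\mid\beta_1]\in U$ yields exactly the asserted right-hand side, a monomial lying entirely in $U$.

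It remains to check that every other summand has a strictly $\prec$-smaller leading term, so no cancellation spoils the candidate. For $j>1$ in the first sum, the cofactor $[\alpha_1\mid\beta_j]$ is a variable outside $D\cup U$ by the third hypothesis; for every $j$ in the second sum, $[\alpha_1\mid b_j]$ is outside $D\cup U$ by the second hypothesis. Since $\prec$ is reverse lexicographic with the variables of $D\cup U$ placed above all others, any monomial of a given degree that involves a variable outside $D\cup U$ is strictly smaller than any monomial of the same degree supported entirely in $D\cup U$. This is precisely the comparison principle invoked at the end of \ref{lem:newlittle}, and it forces the distinguished $j=1$ term to dominate.

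The only delicate point is verifying that the inductive hypotheses transfer to the complementary minor of the $j=1$ term. This is straightforward: removing row $\alpha_1$ and column $\beta_1$ preserves the $U$-diagonal structure on the indices $2,\ldots,s-q$, and the three index-comparison conditions only need to be checked on a sub-collection of index pairs where they already hold. I do not expect any serious obstacle beyond keeping careful track of which of the three exclusions from $D\cup U$ justifies the smallness of each off-diagonal cofactor.
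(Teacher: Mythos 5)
Your proof is correct and matches the paper's intended argument: the paper disposes of \ref{lem:newlittle2} by remarking that it is ``the same proof but expanding along the first row'' as \ref{lem:newlittle}, and your induction on $s-q$, expanding along the row indexed by $\a_1$ and isolating the $j=1$ term, is exactly that argument carried out in full. The hypothesis transfer and the revlex comparison principle you invoke are both handled correctly.
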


  \begin{Lemma}
     \label{lem:special}
     Let $a_1 = 1$, $a_{2i} = 2i+1$, $a_{2i+1} = 2i$ for $1 \leq i \leq \ell$ and
         $b_1 = 2$, $b_{2i} = 2i-1$, $b_{2i+1} = 2i+2$ for $1 \leq i \leq \ell$.
     Then for $1 \leq \ell \leq \frac{n-2}{2}$  one has
     \begin{eqnarray} 
        \label{eq:odd}
        \iin_\prec [a_1,\ldots, a_{2\ell+1}|b_1,\ldots, b_{2\ell+1}] & = & 
           x_{1,2} x_{1,3} x_{2,4} \cdots x_{2\ell,2\ell+2}.
     \end{eqnarray} 
     and for $1 \leq \ell \leq \frac{n-3}{2}$  one has
     \begin{eqnarray} 
       \label{eq:even}
       \iin_\prec [a_1,\ldots, a_{2\ell}|b_1,\ldots, b_{2\ell}] & = &  
         x_{1,2} x_{1,3} x_{2,4} \cdots x_{2\ell-1,2\ell+1}.
     \end{eqnarray}
  \end{Lemma}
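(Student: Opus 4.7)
The plan is to prove \eqref{eq:odd} and \eqref{eq:even} simultaneously by induction on $\ell$, using Laplace expansion along a single carefully chosen row or column. The base case $\ell=1$ reduces to direct inspection of $[1,3\,|\,2,1]=x_{12}x_{13}-x_{11}x_{23}$ and of the $3\times 3$ minor $[1,3,2\,|\,2,1,4]$; in each, the reverse-lex rule (with the variables of $D\cup U$ listed ahead of all others) immediately selects the claimed leading monomial. For the inductive step I would expand the even-case minor along its last row (indexed by $a_{2\ell}=2\ell+1$) and the odd-case minor along its last column (indexed by $b_{2\ell+1}=2\ell+2$). In each case the entry at the ``diagonal position'' of the minor is $x_{2\ell-1,2\ell+1}$ or $x_{2\ell,2\ell+2}$, both of which lie in $U$, and the corresponding cofactor is precisely the minor of the opposite parity at parameter $\ell-1$ or $\ell$. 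By the inductive hypothesis, the distinguished summand thus contributes a monomial equal to the claimed right-hand side.

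The remaining task is to show that no other summand in the Laplace expansion yields a leading monomial that dominates the distinguished one under $\prec$. The key observation is that $\prec$ is reverse lexicographic with all variables of $D\cup U$ placed ahead of every other variable, so any monomial containing a variable outside $D\cup U$ is strictly smaller than any monomial of the same degree supported entirely within $D\cup U$. Hence it suffices to verify that for every off-diagonal index $i$ (respectively $j$) the entry $x_{a_i,b_{2\ell+1}}$ (respectively $x_{a_{2\ell},b_j}$) lies outside $D\cup U$. Away from the boundary case $2\ell+2=n$, this is a short direct enumeration: the only way $x_{a_i,2\ell+2}$ can land in $D\cup U$ is via a jump of two with $a_i=2\ell$, which forces $i=2\ell+1$; a symmetric argument handles the even case.

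The sole genuine obstacle is the boundary $2\ell+2=n$ in \eqref{eq:odd}: there the special element $x_{n-1,n}\in U$ is available, and the summand with $i=2\ell$ produces the variable $x_{2\ell+1,2\ell+2}=x_{n-1,n}$, a true competitor. However, its cofactor is the determinant of the symmetric principal minor on the index set $\{1,\dots,n-2\}$, which involves only variables $x_{ij}$ with $i,j\le n-2$, so its leading monomial cannot contain $x_{n-1,n}$. Because $x_{n-1,n}$ is the \emph{last} variable of the $D\cup U$ block in $\prec$, the reverse-lex rule at $x_{n-1,n}$ immediately favours the distinguished summand (which does not use $x_{n-1,n}$) over the competitor. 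The main difficulty I expect is the bookkeeping across these cases; no ingredient beyond Lemmas \ref{lem:small}--\ref{lem:newlittle2} and the structure of $\prec$ should be required.
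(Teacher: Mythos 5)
Your proposal is substantially the same as the paper's argument: induct on $\ell$, prove the even case at $\ell$ by Laplace expansion along the row indexed $a_{2\ell}=2\ell+1$ using the odd case at $\ell-1$, then prove the odd case at $\ell$ by expansion along the column indexed $b_{2\ell+1}=2\ell+2$ using the even case at $\ell$, and in each expansion identify the unique entry lying in $D\cup U$ so that the reverse-lex rule (which puts $D\cup U$ ahead of all other variables) selects the distinguished summand.

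Where you part ways with the paper is the boundary case $2\ell+2=n$ (i.e.\ $n$ even, $\ell=\tfrac{n-2}{2}$), which you single out explicitly. This is a real issue: in that situation the column $b_{2\ell+1}=n$ contains \emph{two} entries from $U$, namely the distinguished $[2\ell\,|\,2\ell+2]=x_{n-2,n}$ and the extra $[2\ell+1\,|\,2\ell+2]=x_{n-1,n}$. The paper's written argument for the odd case asserts $[2i+1\,|\,2\ell+2]\notin D\cup U$ for $i=1,\dots,\ell$ under the blanket assumption $2\ell+1\le n-2$; that assumption fails exactly at $\ell=\tfrac{n-2}{2}$, so the competing term $x_{n-1,n}\cdot(\text{cofactor})$ is never addressed there. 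Your resolution is correct: that cofactor is (up to sign and reordering) the principal minor on $\{1,\dots,n-2\}$, so its leading monomial uses only variables $x_{ij}$ with $i,j\le n-2$ and in particular not $x_{n-1,n}$; since $x_{n-1,n}$ is the smallest variable of $D\cup U$ under $\prec$ and the distinguished monomial $x_{n-2,n}\cdot x_{12}x_{13}\cdots x_{n-3,n-1}$ avoids it and all smaller variables, the reverse-lex rule makes the distinguished monomial strictly larger. (You should spell out the two sub-cases — cofactor leading monomial containing a variable outside $D\cup U$, versus entirely inside — but both go the same way.) So your plan is not merely correct: it repairs a boundary case that the paper's proof leaves unargued. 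One small bookkeeping point worth making explicit when you write it up: the odd case at $\ell=\tfrac{n-2}{2}$ invokes the even case at the same $\ell$, which lies just past the range $\ell\le\tfrac{n-3}{2}$ stated in the lemma; the even-case induction step goes through there without change (no $x_{n-1,n}$ can appear in that row since $b_j\le 2\ell<n$), so you should note that the even formula extends to $\ell=\tfrac{n-2}{2}$ as part of the simultaneous induction.
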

  \begin{proof}
    We proceed by induction on $\ell$.
    For $\ell = 1$ a direct computation yields the result.     
    Assume $\ell > 1$. 

    First, we consider $[a_1,\ldots, a_{2\ell}|b_1,\ldots, b_{2\ell}]$ for $1 \leq \ell \leq \frac{n-3}{2}$. 
    The entries of the $(2\ell)$\textsuperscript{th} row of $[a_1,\ldots, a_{2\ell}|b_1,\ldots, b_{2\ell}]$
    are $[2\ell+1|2]$, $[2\ell+1|2i-1]$ for $1 \leq i \leq \ell$ and $[2\ell+1|2i+2]$ for $1 \leq i \leq \ell-1$ .
    Since $1 < \ell$ and $2\ell+1 \leq n-2$ we have that $[2\ell+1|2]$, $[2\ell+1|2i+2] \not\in D \cup U$ for $1 \leq i \leq \ell-1$ .  
    For $1 \leq i \leq \ell-1$ also $[2\ell+1|2i-1] \not \in D \cup U$. 
    Thus expanding  $[a_1,\ldots, a_{2\ell}|b_1,\ldots, b_{2\ell}]$ along its last row we obtain 
    for all columns except for the $(2\ell)$\textsuperscript{th} column a factor that does not lie $D \cup U$.
    For the last column we get the term
    $$[2\ell+1|2\ell-1] [a_1,\ldots, a_{2(\ell-1)+1}|b_1,\ldots, b_{2(\ell-1)+1}].$$
    By the induction hypothesis \eqref{eq:odd} its initial term is a monomial in $U$ and thus it is the initial term 
    $[a_1,\ldots, a_{2\ell}|b_1,\ldots, b_{2\ell}]$. 
    This implies \eqref{eq:even} for $\ell$.

    Analogously, consider the $(2\ell+1)$\textsuperscript{st} column of $[a_1,\ldots, a_{2\ell+1}|b_1,\ldots, b_{2\ell+1}]$.
    Its entries are $[1|2\ell+2]$, $[2i+1|2\ell+2]$, $i = 1,\ldots, \ell$ and $[2i|2\ell+2]$, $i=1,\ldots, \ell$.
    Since $1 < \ell$ and $2\ell+1 \leq n-2$ we have that $[1|2\ell+2]$, $[2i+1|2\ell+2] \not \in D \cup U$ for $i=1,\ldots, \ell$.  
    For $i = 1,\ldots, \ell-1$ also $[2i|2\ell+2] \not\in D \cup U$. 
    Thus expanding $[a_1,\ldots, a_{2\ell+1}|b_1,\ldots, b_{2\ell+1}]$ along its last column we obtain 
    for all row except of the $(2\ell+1)$\textsuperscript{st} row a factor that does not lie $D \cup U$.
    For the last row we get the term
    $$[2\ell|2\ell+2] [a_1,\ldots, a_{2\ell}|b_1,\ldots, b_{2\ell}].$$
    By \eqref{eq:even} for $\ell$ its initial term is a monomial in $U$ and therefore is the initial term of 
    $[a_1,\ldots, a_{2\ell+1}|b_1,\ldots, b_{2\ell+1}]$. 
    This then implies \eqref{eq:odd} for $\ell$.
  \end{proof}

  \begin{Lemma}
     \label{lem:special2}
     Let $a_{s} = n-1$, $a_{s-2i} = n-(2i+1)$, $a_{s-(2i-1)} = n-(2i-2)$ for $1 \leq i \leq \ell$ and
         $b_{s} = n$, $b_{s-2i} = n-(2i-1)$, $b_{s-(2i-1)} = n-2i$ for $1 \leq i \leq \ell$.
     Then for $1 \leq \ell \leq \frac{n-3}{2}$  
     \begin{eqnarray} 
        \iin_\prec [a_{s-2\ell},\ldots, a_{s}|b_{s-2\ell},\ldots, b_{s}] & = & 
           x_{n-2\ell-1,n-2\ell+1} \cdots x_{n-3,n-1} x_{n-2,n} x_{n-1,n}
     \end{eqnarray} 
     and for $1 \leq \ell \leq \frac{n-2}{2}$  
     \begin{eqnarray} 
       \iin_\prec [a_{s-2\ell+1} ,\ldots, a_{s}|b_{s-2\ell+1} , \ldots,  b_{s}] & = &  
          x_{n-2\ell,n-2\ell+1} \cdots x_{n-3,n-1} x_{n-2,n} x_{n-1,n}.
     \end{eqnarray}
  \end{Lemma}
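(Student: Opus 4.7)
The statement is the mirror image of Lemma~\ref{lem:special} applied to the bottom-right corner of $X$ rather than to the top-left, obtained essentially by the substitution $k \mapsto n+1-k$ on indices.  Accordingly, my plan is to run the proof of Lemma~\ref{lem:special} in reverse: I expand each minor along its \emph{first} row (for the odd case) or its \emph{first} column (for the even case), so that in each case the pivot is the top-left diagonal entry of the submatrix, which turns out to lie in $U$ — namely $x_{n-2\ell-1,\,n-2\ell+1}$ for the odd case and $x_{n-2\ell,\,n-2\ell+2}$ for the even case.

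The induction is on $\ell$, alternating the odd and even cases.  The base is the $2 \times 2$ even minor at $\ell = 1$,
\[
  [n,\,n-1 \mid n-2,\,n] \;=\; x_{n-2,n}\,x_{n-1,n} \;-\; x_{nn}\,x_{n-1,n-2},
\]
whose leading monomial under $\prec$ is $x_{n-2,n}\,x_{n-1,n}$: in the exponent-difference the rightmost (smallest) nonzero entry is at the ``rest'' variable $x_{n-1,n-2}$ with sign $-1$, so reverse lex selects the all-$U$ monomial.  For the inductive step from the even case at $\ell$ to the odd case at $\ell$, I expand the $(2\ell+1)$-minor along its first row, indexed by $a_{s-2\ell} = n-2\ell-1$.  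The first-column entry $x_{n-2\ell-1,\,n-2\ell+1}$ is a step-$2$ variable in $U$; every other first-row entry $x_{n-2\ell-1,\,b_j}$ with $b_j \in \{n-2\ell\} \cup \{n-2\ell+2,\ldots,n\}$ has step different from~$2$, and the bound $\ell \leq (n-3)/2$ forces $n-2\ell-1 \geq 2$ so that the endpoint pairs $\{1,2\}$ and $\{n-1,n\}$ are also avoided; consequently the pivot is the unique first-row entry lying in $D \cup U$.  Since the cofactor at the pivot position is the even minor at the same $\ell$, whose leading term is by induction a monomial in $U$, the pivot's contribution to $M$ has an all-$U$ leading monomial.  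Every other cofactor contribution contains at least one ``rest'' factor, and under $\prec$ (which ranks ``rest'' below $U$) any monomial containing a ``rest'' variable is strictly $\prec$-smaller than an all-$U$ monomial of the same degree; hence the pivot contribution dominates.  The step from the odd case at $\ell-1$ to the even case at $\ell$ is entirely analogous — now expanding along the first column $b_{s-2\ell+1} = n-2\ell$ with pivot $x_{n-2\ell,\,n-2\ell+2}$, whose cofactor is precisely the odd minor at $\ell-1$.

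The main obstacle is the finicky bookkeeping needed at each inductive step to verify that the pivot is the \emph{only} entry of the first row (resp.\ first column) lying in $D \cup U$.  This is where the stated bounds $\ell \leq (n-3)/2$ and $\ell \leq (n-2)/2$ intervene: they exclude the degenerate cases in which $n-2\ell-1$ or $n-2\ell$ would drop to~$1$ and drag the endpoint variable $x_{12}$ into the expanded row or column.  A secondary but routine check — parallel to the one implicit in the proof of Lemma~\ref{lem:special} — is that the claimed all-$U$ leading monomial is realized by a \emph{unique} permutation of the minor expansion, so that it cannot cancel with itself in the determinantal sum.
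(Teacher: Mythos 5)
The paper states this lemma without proof, clearly intending it to be the mirror image of Lemma~\ref{lem:special} under the index substitution $k \mapsto n+1-k$; your proof is precisely that mirror argument, so it matches the intended approach. The two essential points are correctly identified: at each step the pivot entry (the top-left corner $x_{n-2\ell-1,n-2\ell+1}$ resp.\ $x_{n-2\ell,n-2\ell+2}$) is the unique entry of the expanded row resp.\ column lying in $D\cup U$, and in a degree-graded reverse lex order with $D\succ U\succ(\text{rest})$ any monomial supported partly outside $D\cup U$ is strictly smaller than a monomial of the same degree supported entirely in $U$, so no cancellation can disturb the pivot contribution. One small remark worth surfacing: you tacitly corrected a typo in the statement — the first factor on the right of the even formula should be $x_{n-2\ell,\,n-2\ell+2}$ (which lies in $U$ and gives $x_{n-2,n}x_{n-1,n}$ at $\ell=1$), not $x_{n-2\ell,\,n-2\ell+1}$ as printed, which is not a variable in $U$ for $2\le n-2\ell$.
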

    
  \begin{Proposition}
     \label{le:initial1}
     For a number $1 \leq s \leq n-2$ let $1 \leq \a_1 \leq \a_2 <\cdots < \a_s \leq n-1$, $2 \leq \b_1 < \cdots < \b_{s-1} \leq \b_s \leq n$ be indices such that
     $m=x_{\a_1 \b_1}\cdots x_{\a_s \b_s}$ is a squarefree monomial with
     $x_{\a_t \b_t}\in U$ for every $t=1,\ldots ,s$.
     Then $m$ is the leading term of an $s$-minor of $X$ with respect to $\prec$.
  \end{Proposition}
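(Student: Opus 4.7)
The plan is to perform a case analysis on whether each of two boundary anomalies occurs: the \emph{left anomaly} $\alpha_1 = \alpha_2 = 1$ (meaning both $x_{12}$ and $x_{13}$ divide $m$) and the \emph{right anomaly} $\beta_{s-1} = \beta_s = n$ (meaning both $x_{n-2,n}$ and $x_{n-1,n}$ divide $m$). Since the variables in $U$ are exactly $x_{i,i+2}$ for $1 \leq i \leq n-2$ together with $x_{12}$ and $x_{n-1,n}$, any repetition in the weakly monotone sequences $\alpha_1 \leq \alpha_2 < \cdots < \alpha_s$ and $\beta_1 < \cdots < \beta_{s-1} \leq \beta_s$ can only occur at these two boundary positions.

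In the generic case, where neither anomaly is present, all $\alpha_t$ and all $\beta_t$ are strictly increasing, so $m = [\alpha_1|\beta_1]\cdots[\alpha_s|\beta_s]$ is the diagonal product of the minor $[\alpha_1,\ldots,\alpha_s|\beta_1,\ldots,\beta_s]$, and Lemma \ref{lem:small} applies directly. When only the left anomaly occurs, I would let $r \geq 2$ be the largest integer such that the left chain $x_{12}, x_{13}, x_{24}, \ldots, x_{r-1,r+1}$ divides $m$. Then Lemma \ref{lem:special} produces an $r$-minor whose initial term is this chain monomial, with rows and columns drawn from $\{1,\ldots,r+1\}$. By maximality of $r$ the next row index $\alpha_{r+1}$ must be at least $r+1$, so the remaining $s-r$ variables have strictly increasing rows and columns supported in $\{r+1,\ldots,n\}$. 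This separation makes the cross-hypotheses of Lemma \ref{lem:newlittle} easy to verify, and that lemma then produces the desired $s$-minor with initial term $m$. The right-anomaly-only case is handled symmetrically via Lemma \ref{lem:special2} and Lemma \ref{lem:newlittle2}.

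When both anomalies occur simultaneously, I would combine the chain constructions in three stages: apply Lemma \ref{lem:special2} to obtain a right-chain $q$-minor; then apply Lemma \ref{lem:newlittle2} to prepend the middle variables $x_{\alpha_{r+1},\beta_{r+1}}, \ldots, x_{\alpha_{s-q},\beta_{s-q}}$, whose rows and columns are strictly increasing and confined to $\{r+1,\ldots,n-q-1\}$; and finally attach the left-chain $r$-minor obtained from Lemma \ref{lem:special}, either by invoking an analogue of Lemma \ref{lem:newlittle2} that allows a structured base to be prepended, or by redoing the cofactor expansion underlying that lemma. The main obstacle I expect is the careful verification of the cross-hypotheses $x_{a_i,\beta_j}, x_{\alpha_i,b_j} \notin D \cup U$ at each stage; these reduce to showing that the three index supports $\{1,\ldots,r+1\}$, the middle range, and $\{n-q,\ldots,n\}$ are separated enough that all cross differences $\beta_j - a_i$ and $b_j - \alpha_i$ are at least $3$, which follows from $r + q \leq s \leq n-2$ and rules out membership in $U$. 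Once these verifications are in place, the stages compose to yield an $s$-minor whose initial term is exactly $m$.
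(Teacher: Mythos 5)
Your Cases 1--3 (no anomaly, left anomaly only, right anomaly only) match the paper's argument exactly. The gap is in the case where both anomalies occur. You propose to compose: first join the middle variables to the right chain via Lemma~\ref{lem:newlittle2}, then attach the left chain $r$-minor ``either by invoking an analogue of Lemma~\ref{lem:newlittle2} that allows a structured base to be prepended, or by redoing the cofactor expansion underlying that lemma.'' Neither is as mild as the wording suggests. Lemmas~\ref{lem:newlittle} and~\ref{lem:newlittle2} permit exactly one structured end; the other end must consist of genuine $U$-entries $x_{\alpha_\ell,\beta_\ell}$ with the internal cross conditions $x_{\alpha_i,\beta_j}\notin D\cup U$ (for $i<j$) holding among those entries. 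When both ends are the special chain minors from Lemmas~\ref{lem:special} and~\ref{lem:special2}, those internal cross conditions fail in whichever chain you place in the ``straight'' slot, because its row and column indices zigzag and produce cross entries landing in $D$: for the right chain one has $[a_{s-1}\,|\,b_s]=[n\,|\,n]=x_{nn}\in D$, and for the left chain $[a_1\,|\,b_2]=[1\,|\,1]=x_{11}\in D$. Your reduction of ``the cross-hypotheses'' to the block-separation inequality $\beta_j-a_i\ge 3$ governs only cross entries \emph{between} blocks, not those \emph{within} a chain block, and so misses exactly the entries that break the lemma.

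The paper's Case~4 therefore does not compose lemmas at all. It forms the full $s$-minor $M$ with all three blocks of indices in place and proves $\iin_\prec(M)=m$ directly, by induction on $r\ge 2$: expand $M$ along the $a_r$-th row, isolate the handful of summands (denoted $P_{r-2}$, $P_{r-4}$, $M_1$) whose cofactor lies in $D\cup U$, apply the induction hypothesis to the principal summand, and then show through further row expansions that each remaining dangerous summand has initial term involving a variable outside $D\cup U$ and hence cannot supply the $\prec$-leading term. This inductive cofactor analysis is the substantive content of Case~4, and your proposal does not supply it; the sentence ``once these verifications are in place, the stages compose'' passes over precisely the point at which the composition breaks down.
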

  \begin{proof}
     We distinguish cases according to the order relations among the $\a_i$ and among the $\b_i$.

     \noindent {\sf Case 1:} $\a_1 < \cdots < \a_s$ and $\b_1 < \cdots < \b_s$. 

     Here the assertion follows from \ref{lem:small}.

     \noindent {\sf Case 2:} $\a_1 \leq \cdots \leq \a_s$ with at least one equality and $\b_1 < \cdots < \b_s$. 

     In this situation it follows that $1 = \a_1 = \a_2 < \cdots < \a_s$ and $\b_1 = 2$, $\b_2 = 3$.  
     If $\a_i = i-1$ for $i=2,\ldots ,s$ then by $s \leq n-2$ we must have $\b_i = i+2$ and the 
     assertion follows from \ref{lem:special}.  
     Thus we can assume that there is a $2 \leq r \leq s-1$  such that $\a_{r+1} \neq r$. 
     We choose $r$ minimal with this property.
     Set $m' = x_{\a_1\b_1}x_{\a_2\b_2}\cdots x_{\a_r\b_r}=
     x_{12}x_{13}x_{24} \cdots x_{r-1,r+1}$. By \ref{lem:special} there exist
     $a_1,\ldots, a_r$, $b_1,\ldots, b_r$ such that $\iin_\prec [a_1,\ldots, a_r|b_1,\ldots, b_r] =
     m'$. 
     By the \ref{lem:special} after possibly exchanging rows and columns 
     we can assume $a_i \leq r$ for $i=1,\ldots, r$. Note that exchanging rows and columns does not change the minor since $X$ is
     symmetric. Then by the choice of $r$ we have $r+2 \leq \a_{r+1}+1 < \b_{r+1} < \cdots < \b_s$, thus $a_i<\b_j-2$ and 
     $[a_i | \b_j]\not\in D\cup U$ for $i=1,\ldots, r$, $j=r+1,\ldots,s$. 
     Moreover, by $\a_i < \a_j = \b_j-2$ for $r+1 \leq i < j \leq s$ it follows
     that $x_{\a_i\b_j} \not\in D \cup U$ for $r+1 \leq i < j \leq s$. 
     Therefore, we can apply \ref{lem:newlittle} to 
     $a_1,\ldots, a_r,\a_{r+1},\ldots, \a_s$ and $b_1,\ldots, b_r,\b_{r+1},\ldots, \b_s$. 
     This shows that 
     \begin{eqnarray*} 
        \iin_\prec [a_1,\ldots, a_r,\a_{r+1},\ldots, \a_s|b_1,\ldots, b_r,\b_{r+1},\ldots, \b_s] & = & \\ 
        \iin_\prec [a_1,\ldots, a_r|b_1,\ldots, b_r] [\a_{r+1}|\b_{r+1}] \cdots [\a_s|\b_s] & = & \\
        m' x_{\a_{r+1} \b_{r+1}} \cdots x_{\a_s\b_s} & = & \\
        x_{\a_1\b_1}x_{\a_2\b_2}\cdots x_{\a_r\b_r}x_{\a_{r+1} \b_{r+1}} \cdots x_{\a_s\b_s}  &=& m.
     \end{eqnarray*}

     \noindent {\sf Case 3:} $\a_1 < \cdots < \a_s$ and $\b_1 \leq \cdots \leq \b_s$ with at least one equality. 

     In this situation it follows that $\b_1 < \cdots < \b_{s-1} = \b_s = n$ and $\a_{s-1} = n-2$, $\a_s = n-1$.  
     If $\b_{s-q} = n-q+1$ for $q=1,\ldots,s-1$ then by $s \leq n-2$ we must have $\a_{s-q} = n-q-1$ for $q=0,\ldots, s-1$
     and the assertion follows from \ref{lem:special2}.  
     Thus we can assume that there is a $2 \leq q \leq s-1$  such that $\b_{s-q} \neq n-q+1$. 
     We choose $q$ minimal with this property.
     Set $m' = x_{\a_{s-q+1}\b_{s-q+1}}x_{\a_{s-q+2}\b_{s-q+2}}\cdots x_{\a_s\b_s}=
     x_{n-q,n-q+2} \cdots x_{n-3,n-1} x_{n-2,n}x_{n-1,n}$. By \ref{lem:special2} there exist
     $a_{s-q+1} ,\ldots, a_s$, $b_{s-q+1},\ldots, b_s$ such that $\iin_\prec [a_{s-q+1},\ldots, a_s|b_{s-q+1},\ldots, b_s] =
     m'$. 
     By the \ref{lem:special} after possibly exchanging rows and columns 
     we can assume $b_i \geq n-q+1$ for $i=s-q+1,\ldots, s$. Then by the choice of $q$ we have 
     $\a_1 < \cdots < \a_{s-q} \leq \b_{s-q} -2 \le n-q-2$, thus $b_i>\a_j-2$ and $[\a_j | b_i]\not\in D\cup U$ for $i=s-q+1,\ldots, s$ and $j=1,\ldots,s-q$. 
     Moreover, by $\a_i < \a_j = \b_j-2$ for $1 \leq i < j \leq s-q$ it follows
     that $x_{\a_i\b_j} \not\in D \cup U$ for $1 \leq i < j \leq s-q$. 
     Therefore,
     we can apply \ref{lem:newlittle2} to 
     $\a_1,\ldots, \a_{s-q},a_{s-q+1},\ldots, a_s$ and $\b_1,\ldots, \b_{s-q} ,b_{s-q+1},\ldots, b_s$. 
     This shows that 
     \begin{eqnarray*} 
        \iin_\prec [\a_1,\ldots, \a_{s-q},a_{s-q+1},\ldots, a_s|\b_1,\ldots, \b_{s-q},b_{s-q+1},\ldots, b_s] & = & \\ 
        \,[\a_{1}|\b_{1}] \cdots [\a_{s-q}|\b_{s-q}] \iin_\prec [a_{s-q+1},\ldots, a_s|b_{s-q+1},\ldots, b_s] & = & \\ 
        x_{\a_{1} \b_{1}} \cdots x_{\a_{s-q}\b_{s-q}} m' & = & \\
        x_{\a_1,\b_1} \cdots x_{\a_{s-q},\b_{s-q}}x_{\a_{s-q+1}\b_{s-q+1}}x_{\a_{s-q+2}\b_{s-q+2}}\cdots x_{\a_s\b_s} &= &m .
     \end{eqnarray*}

     \noindent {\sf Case 4:} $\a_1 \leq \cdots \leq \a_s$ and $\b_1 \leq \cdots \leq \b_s$ with at least one equality in both. 

     In this situation it follows that $1 = \a_1 = \a_2 < \cdots < \a_s$ and $\b_1 = 2$, $\b_2 = 3$  
     and $\b_1 < \cdots < \b_{s-1} = \b_s = n$ and $\a_{s-1} = n-2$, $\a_s = n-1$.  
     Thus we can assume that there is a $2 \leq r \leq s-1$ such that $\a_{r+1} > r$
     and there is a $2 \leq q \leq s-1$  such that $\b_{s-q} < n-q+1$. 
     We choose $r$ and $q$ minimal with this property. 
     Set $m' = x_{\a_1\b_1}x_{\a_2\b_2}\cdots x_{\a_r\b_r}=
     x_{12}x_{13}x_{24} \cdots x_{r-1,r+1}$. By \ref{lem:special} there exist
     $a_1,\ldots, a_r$, $b_1,\ldots, b_r$ such that $\iin_\prec [a_1,\ldots, a_r|b_1,\ldots, b_r] =
     m'$. After possibly exchanging rows and columns 
     we can assume $a_i \leq r$ for $i=1,\ldots, r$.
     
     Moreover set $m'' = x_{\a_{s-q+1}\b_{s-q+1}}\cdots x_{\a_s\b_s}=
     x_{n-q,n-q+2} \cdots x_{n-3,n-1} x_{n-2,n}x_{n-1,n}$. 
     
     By \ref{lem:special2} there exist
     $a_{s-q+1} ,\ldots, a_s$, $b_{s-q+1},\ldots, b_s$ such that $$\iin_\prec [a_{s-q+1},\ldots, a_s|b_{s-q+1},\ldots, b_s] =
     m'',$$ and after possibly exchanging rows and columns 
     we can assume $b_i \geq n-q+1$ for $i=s-q+1,\ldots, s$.
     Set $$M= [a_1,\ldots, a_r, \a_{r+1},\ldots, \a_{s-q},a_{s-q+1},\ldots, a_s|b_1,\ldots, b_r,\b_{r+1},\ldots, \b_{s-q},b_{s-q+1},\ldots, b_s].$$ 
     Note that the row and the column indices of $M$ are distinct since  $a_i\le r<\a_{r+1}$, $b_i\le r+1<\b_{r+1}$ for $i=1,\ldots r$ and
     $\a_{s-q}<n-q-1\le a_j$,  $\b_{s-q}<n-q+1\le b_j$ for $j=s-q+1,\ldots, s$.
    
     We prove that $\iin_\prec(M)=m$, by induction on $r\ge 2$.

     If $r=2$, then $M =[2,1, \a_{3},\ldots, \a_{s-q},a_{s-q+1},\ldots, a_s|1,3,\b_{3},\ldots, \b_{s-q},b_{s-q+1},\ldots, b_s]$.
     By expanding along the first row one obtains:
     $$M = [2 \,|\,1]M_1- [2 \,|\,3]M_2+ \sum_{j=3}^{s-q} \pm[2 \,|\,\b_j]M_j + \sum_{j=s-q+1}^{s} \pm[2 \,|\,b_j]N_j . $$ 
        Note, that $[2,3]\not\in D\cup U$ and since $\b_j\ge 5$ and $b_j>5$, then  $[2 \,|\,\b_j]$ and $[2 \,|\,b_j]$ do not lie in $D\cup U$ for every $j$.
     Thus $\iin(M)=\iin([2,1]M_1)=[2,1]\iin(M_1)$. 
     Indeed, the only element from $D \cup U$ in the first row of $M_1$ is i$[1,3]$. Hence, by expanding $M_1$ along the first row and by using Case 3 one obtains:
     $$\iin(M_1)=[1,3]\iin([\a_{3},\ldots, \a_{s-q},a_{s-q+1},\ldots, a_s|\b_{3},\ldots, \b_{s-q},b_{s-q+1},\ldots, b_s])=m.$$
     \medskip

     Now assume $r>2$.  

     We consider the case when $r$ is even. The case when $r$ odd can be treated in the same way. 
    
     One has:
     $$M= [2,1,a_3, \ldots, a_r, \a_{r+1},\ldots, \a_{s-q},a_{s-q+1},\ldots, a_s|1,3,b_3,\ldots, b_r,\b_{r+1},\ldots, \b_{s-q},b_{s-q+1},\ldots, b_s]$$
     with $a_r=r-1, b_r=r+1,\ a_{r-1}=r,\  b_{r-1}= r-2,\ a_{r-2}=r-3,\ b_{r-2}=r-1,\ b_{r-4}=r-3$. Note that $\a_j\ge r+1$,  $\b_j\ge r+3$ for every $j$ and $b_i>\b_j$ for all $i,j$.

     By expanding $M$ along the $a_r$\textsuperscript{th} row one obtains:
     $$M = [a_r \,|\,1]M_1- [a_r \,|\,3]M_2+ \sum_{j=3}^{r} \pm[a_r \,|\,b_j]P_j+\sum_{j=r+1}^{s-q}\pm[a_r \,|\,\b_j]M_j+\sum_{j=s-q+1}^{s}\pm[a_r \,|\,b_j]N_j $$ 
     
     Consider $P_r=[2,1, a_{3},\ldots, a_{r-1}, \a_{r+1},\ldots, a_s|1,3,b_{3},\ldots,b_{r-1},\b_{r+1},\ldots,\b_{s-q},b_{s-q+1},\ldots, b_s]$. By the assumption 
     on the indices involved in $M$, the only element of the $a_{r-1}$\textsuperscript{st} row in $D\cup U$ is $[a_{r-1},b_{r-1}]=[r,r-2]$. Thus by expanding 
     $P_r$ along the $a_{r-1}$\textsuperscript{st} row we get that $m = \iin([a_r,b_r]P_r)$ equals 

     $$[a_r,b_r][a_{r-1},b_{r-1}][2,1, a_{3},\ldots, a_{r-2}, \a_{r+1},\ldots, a_s|1,3,b_{3},\ldots,b_{r-2},\b_{r+1},\ldots,\b_{s-q},b_{s-q+1},\ldots, b_s]$$ 

     by the induction hypotheses. 

     Now $[a_r \,|\,\b_j]\not\in  D\cup U$ for $j=r+1,\ldots, s-q$ and $[a_r \,|\,b_j]\not\in  D\cup U$ for $j=s-q+1,\ldots, s$. Moreover, if $j=3,\ldots r$  
     the only $[a_r \,|\,b_j]$ that can be in $D\cup U$ are from the set $\{[a_r,b_r]=[r-1,r+1],\  [a_r,b_{r-2}]=[r-1,r-1],\  [a_r,b_{r-4}]=[r-1,r-3]\}$.  

     It follows that
     $$\begin{array}{rl} 
         \iin(M)&=  \iin(m+[a_r \,|\,1]M_1+ [a_r \,|\,3]M_2+[a_r \,|\,b_{r-2}]P_{r-2}+[a_r \,|\,b_{r-4}]P_{r-4})\\
    \           &=  \iin(m+[a_r \,|\,1]M_1+ [a_r \,|\,3]M_2+[a_r \,|\,a_{r}]P_{r-2}+[a_r \,|\,a_r-2]P_{r-4}).
        \end{array}
     $$

     Note, that since $r > 2$ even implies $r\ge 4$. 
     If $r=4$, then $[a_r \,|\,1]=[3,1]$ and $[a_r \,|\,3]=[3,3]=[a_r,b_{r-2}]$. If $r=6$, then $ [a_r \,|\,1]=[5,1]\not\in D\cup U$ and $[a_r \,|\,3]=[a_r,b_{r-4}]$. 
     If $r\ge 8$, then  $a_r\ge 7$ and $ [a_r \,|\,1]$ and $[a_r \,|\,3]$ do not lie in $D\cup U$. Therefore,
     $$\iin(M)=\left\{\begin{array}{ll}
                       \iin(m+[a_r \,|\,a_{r}]P_{r-2}+[a_r \,|\,a_r-2]P_{r-4})& \mbox{ if } r\ge 6\\ 
                       \iin(m+ [3 \,|\, 1]M_1+[3\,|\, 3]P_{r-2}) & \mbox{ if } r=4  \\ 
                    \end{array}. \right.$$

     In the following we prove that $\iin( P_{r-2}),\ \iin(P_{r-4})$ and $\iin(M_1)$ involve at least one indeterminate not in $D\cup U$, 
     thus $[a_r \,|\,a_{r}]\iin(P_{r-2}), [a_r \,|\,a_r-2]\iin(P_{r-4})$ and $[3 \,|\, 1]\iin(M_1)$ are larger than $m$. From this the assertion follows.

     We will only treat the case of $P_{r-2}$, an analogous reasoning will covers the cases $P_{r-4}$ and $M_1$.

     For $P_{r-2}=[2,1, a_{3},\ldots, a_{r-1}, \a_{r+1},\ldots, a_s|1,3,b_{3},\ldots,b_{r-3},b_{r-1},b_r,\b_{r+1},\ldots, b_s]$
     the only element of the $a_{r-1}$\textsuperscript{st} row which is in $D\cup U$ is $[a_{r-1},b_{r-1}]=[r, r-2]$. Expanding along the $a_{r-1}$\textsuperscript{st} row, we get 
     $\iin(P_{r-2})=[a_{r-1},b_{r-1}]Q$, with $$Q=[2,1, a_{3},\ldots, a_{r-2}, \a_{r+1},\ldots, a_s|1,3,b_{3},\ldots,b_{r-3},b_{r-1},b_r,\b_{r+1},\ldots, b_s].$$ Now the only elements 
     in the $a_{r-2}$\textsuperscript{nd} row of $Q$ which are in $D\cup U$ are
     $[a_{r-2}, b_{r-4}  ]=[r-3, r-3]$ and $[a_{r-2}, b_{r-6}  ]=[r-3, r-5]$. Thus by expanding along the $a_{r-2}$\textsuperscript{nd} row we obtain
     $\iin(Q)=\iin([a_{r-2}, b_{r-4}]Q_1+[a_{r-2}, b_{r-6}]Q_2)$ with 

     $$Q_1=[2,1, a_{3},\ldots, a_{r-3}, \a_{r+1},\ldots, a_s|1,3,b_{3},\ldots,b_{r-5},b_{r-3},b_{r-1},b_r,\b_{r+1},\ldots, b_s]$$ and 

     $$Q_2=[2,1, a_{3},\ldots, a_{r-3}, \a_{r+1},\ldots, a_s|1,3,b_{3},\ldots,b_{r-7},b_{r-5},b_{r-4},b_{r-3},b_{r-1},b_r,\b_{r+1},\ldots, b_s].$$ 

     Expanding all the minors we obtain in this way,  step by step along the $a_{r-j}$\textsuperscript{th} rows, with $j=3,...,r-3$, and arguing in the same way, we get 
     $\iin(P_{r-2})=\iin([2,1| c, b_r]p)$ with $p$ a polynomial involving only indeterminates in $D\cup U$, and $c<b_r$. Thus  
     $\iin(P_{r-2})=\iin([2,c][1,b_r]p-[2,b_r][1|c]p)$ which is bigger than $m$ since $[1,b_r]$ and $[2,b_r]$ are not in $D\cup U$.
     \medskip

\end{proof}

  Now we are in position to give a description of some leading terms of $s$-minors of $X$ with respect to $\prec$.

  \begin{Proposition}
    \label{pr:initmon}
    Let $m$ be a square-free monomial of degree $s$ for some $1 \leq s \leq n-2$ in the set $D \cup U$ such that for all $x_{ii}|m$ and $x_{hk}|m$
    with $x_{hk}\in U$ one has $i\neq h$ and $i\neq k$. Then $m$ is the leading term of an $s$-minor of $X$ with respect to $\prec$.
  \end{Proposition}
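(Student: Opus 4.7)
The plan is to decompose $m=m_D\cdot m_U$, where $m_D=\prod_{t=1}^k x_{i_t i_t}$ is the diagonal part of $m$ (with indices $I:=\{i_1,\ldots,i_k\}$) and $m_U$ is its $U$-part of degree $s-k$. I would then build an $s$-minor $M:=[I\cup A\mid I\cup B]$ from the $(s-k)$-minor $[A\mid B]$ produced by Proposition~\ref{le:initial1} for $m_U$, and prove $\iin_\prec(M)=m$ via a generalized Laplace expansion along the rows indexed by $I$.

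First, I would verify that $I\cap(A\cup B)=\emptyset$ so that $M$ is a well-defined $s$-minor. Inspecting Cases~1--4 of the proof of \ref{le:initial1} together with the constructions in \ref{lem:special} and \ref{lem:special2}, one sees that $A\cup B$ is always contained in the set $\Sigma$ of indices occurring in some $U$-factor of $m_U$: in Case~1 this is immediate, and in Cases~2--4 the auxiliary indices $\{1,\ldots,r\}$ (respectively $\{n-q+1,\ldots,n\}$) introduced by the special lemmas are precisely the indices of the $U$-factors $x_{12},x_{13},x_{24},\ldots$ (resp.~$x_{n-1,n},x_{n-2,n},\ldots$) that these lemmas force to occur in $m_U$. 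The hypothesis on $I$ gives $I\cap\Sigma=\emptyset$, hence $I\cap(A\cup B)=\emptyset$. The generalized Laplace expansion then reads
\[ M\;=\;\sum_{\substack{S\subseteq I\cup B\\|S|=k}} \epsilon_S\,[I\mid S]\cdot[A\mid(I\cup B)\setminus S]. \]
The summand $S=I$ equals $\pm[I\mid I]\cdot[A\mid B]$; since every $D$-variable is $\prec$-larger than every $U$-variable, one has $\iin_\prec[I\mid I]=m_D$, and combined with $\iin_\prec[A\mid B]=m_U$ this summand contributes the term $m=m_D m_U$ to $M$ with coefficient $\pm1$. Moreover, $m$ appears in $M$ uniquely: any Leibniz term of $M$ equal to $m$ must set $i_t\mapsto i_t$ for each $t$ (the factor $x_{i_t i_t}$ can only arise from the diagonal entry in row $i_t$, column $i_t$, since $i_t\notin A\cup B$), and then must agree with the leading permutation of $[A\mid B]$ on the remainder.

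The main obstacle is showing that every remaining summand ($S\neq I$) contributes only monomials strictly below $m$. Any such summand is forced to use an off-diagonal entry $x_{i,b}$ in $[I\mid S]$ with $i\in I$ and $b\in B\setminus I$. If $x_{ib}\notin D\cup U$, every monomial of this summand has a factor outside $D\cup U$ and hence lies $\prec m$, since $m$ is supported on $D\cup U$ and off-$D\cup U$ variables are the $\prec$-smallest. If $x_{ib}\in U$, then $\{i,b\}$ must be a $U$-pair; bijectivity forces the companion factor $[A\mid(I\cup B)\setminus S]$ to cover the column $i$ via some row $a\in A$ with $x_{a,i}\in U$, i.e., another $U$-pair. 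Exploiting the fact that the $U$-graph (with vertex set $\{1,\ldots,n\}$ and edge set the $U$-pairs) is a single $n$-cycle in which every vertex has exactly two neighbours, one checks that any such ``swap'' must consume a $U$-variable strictly $\prec$-smaller than every $U$-factor of $m$---namely one of $x_{12}$, $x_{n-1,n}$, or some $x_{j,j+2}$ whose index $j$ lies beyond the support of $m_U$. Since in the reverse lexicographic order any monomial containing a variable strictly smaller than every variable of $m$ is itself $\prec m$, all alternative summands contribute $\prec m$, so $\iin_\prec(M)=m$.
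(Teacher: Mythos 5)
Your approach is genuinely different from the paper's. The paper expands $M$ along the single row $i_1$ and then runs an induction on $j$ (the number of diagonal factors), reducing the cofactor $[i_1|i_1]M_{11}$ to the case $j-1$ and handling the remaining cofactor terms by a detailed case analysis. You instead perform a full Laplace expansion along all rows indexed by $I$ at once, so that no induction is needed: the summand $S=I$ is $\pm[I\mid I][A\mid B]$ and can be handled directly. Your treatment of that main summand is essentially correct, and your observation that $m$ can only arise from $S=I$ (since $i_t\notin A\cup B$ forces each $x_{i_ti_t}$ to come from position $(i_t,i_t)$) is a nice clean way to see that $m$ has nonzero coefficient in $M$.

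However, there is a real gap in the last step, where you must show that every summand with $S\neq I$ contributes only monomials $\prec m$. You dispose of the case where the forced off-diagonal entry $x_{i,b}$ of $[I\mid S]$ lies outside $D\cup U$, and then for the case $x_{i,b}\in U$ you assert, with the phrase ``one checks,'' that the resulting swap must consume a $U$-variable strictly $\prec$-smaller than \emph{every} $U$-factor of $m$. This is not proven, and in fact it is false as stated. Take $n\geq 13$ and $m=x_{55}x_{77}\,x_{13}x_{9,11}$, so $I=\{5,7\}$, $A=\{1,9\}$, $B=\{3,11\}$; the summand with $S=\{3,5\}$ contains the $U$-monomial $x_{35}x_{57}$ coming from $[5,7\mid 3,5]$, and $x_{35}$ is \emph{not} $\prec$-smaller than the factor $x_{9,11}$ of $m_U$. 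What actually saves the day in this example is the companion minor $[1,9\mid 7,11]$, every term of which contains a variable outside $D\cup U$ (namely $x_{17}$, $x_{1,11}$ or $x_{97}$), so the full Leibniz term is still $\prec m$. Your sketch never analyses the companion factor $[A\mid(I\cup B)\setminus S]$, and the ``$n$-cycle'' observation about the $U$-graph by itself does not yield the conclusion. Filling this gap would require a case analysis comparable in delicacy to the one the paper carries out after expanding along a single row (where the inductive hypothesis controls the residual minor), so at present the proposal is incomplete precisely at the crux of the argument.

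A smaller stylistic issue: your dichotomy ``if $x_{ib}\notin D\cup U$, every monomial of this summand has a factor outside $D\cup U$'' conflates a per-term property with a per-summand one (the relevant $x_{i,b}$ varies with the Leibniz term of $[I\mid S]$), so the two branches should really be stated term by term. This does not affect the substance, but it obscures the fact that the second branch needs to cover all Leibniz terms of \emph{both} factors of the summand, not just the single factor $x_{i,b}$.
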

  \begin{proof}
    Let $m=x_{i_1 i_1}\cdots x_{i_j i_j}x_{\a_1 \b_1}\cdots x_{\a_p \b_p}$ be a squarefree monomial with 
    $x_{\a_\ell \b_\ell}\in U$ for every $\ell=1,\ldots, p$ and $j+p=s$.
    We prove by induction on $j$ that if $i_k\not=\a_\ell$ and $i_k\not=\b_\ell$ for every $1 \leq k \leq j$, $1 \leq \ell \leq p$, then
    $m$ is the leading term of an $s$-minor $M$ of the matrix $X$  with respect to $\prec$.

    The induction base $j=0$ is a consequence of \ref{le:initial1} and we may now assume $j>0$.

    We set $m=x_{i_1 i_1}\cdots x_{i_j i_j}\cdot m'$ with $m'=x_{\a_1 \b_1}\cdots x_{\a_p \b_p}$.
    By the \ref{le:initial1} there exists $a_1,\ldots, a_p,b_1,\ldots, b_p$ such that 
    for $M'=[a_1,\ldots,a_p\,|\,b_1,\ldots,b_p]$ we have $\iin_\prec(M')=m'$ and
    $\{ \a_1,\ldots, \a_p ,\b_1,\ldots, \b_p\} = \{a_1,\ldots, a_p,b_1,\ldots,b_p\}$.
    To conclude the proof we show that $m=\iin_\prec (M)$, with
    $$M=[i_1,\ldots,i_j,a_1,\ldots,a_p\,|\,i_1,\ldots,i_j,b_1,\ldots,b_p].$$

    Note that by assumption $i_k\not=a_\ell$ and $i_k\not=b_\ell$ for every $1 \leq k \leq j$,
    $1 \leq \ell \leq p$, thus all the row indices
   (resp. the column indices) in $M$ are distinct and $M\not=0$.

    Expanding $M$ along its first row one has
    $$M= [i_1\,|\, i_1][i_2,\ldots,i_j,a_1,\ldots,a_p\,|\,i_2,\ldots,i_j,b_1,\ldots,b_p]+
    \sum_{k=2}^{j} \pm [i_1 \,|\,i_k]M_k+\sum_{h=1}^{p} \pm [i_1 \,|\,b_h]N_h.$$
    By induction $\iin_\prec([i_2,\ldots,i_j,a_1,\ldots,a_p\,|\,i_2,\ldots,i_j,b_1,\ldots,b_p])=x_{i_2\, i_2}\cdots x_{i_j\, i_j}\cdot m'$,
    thus to conclude we have to prove that in the two sums in the expansion cannot appear any term bigger than $m$.

    \smallskip

    First consider the terms in $ \sum_{k=2}^{j} \pm [i_1 \,|\,i_k]M_k.$

    Let $\mathcal{I}=\{i_1,i_2,\ldots,i_j\}.$ If $i_1+2\not\in \mathcal{I}$ and $(i_1,i_2)\not\in\{(1,2),(n-1,n)\}$, then $x_{i_1\,i_k}\not\in D\cup U$ for every $k$, thus no term bigger than $m$ appears in the sum.

    If $i_1+2\in \mathcal{I}$, then $i_2=i_1+2$ or $i_2=i_1+1$ and $i_3=i_1+2$. Suppose $i_2=i_1+2$ (in the other case one concludes similarly).
    The only possible terms bigger than $m$  come from
    $[i_1 \,|\,i_2]M_2=[i_1 \,|\,i_1+2][i_2,\ldots,i_j,a_1,\ldots, a_p\,|\, i_1,i_3\ldots,i_j,b_1,\ldots,b_p]$.
    If $i_2+2\not\in\I$ we conclude by expanding $M_2$ along its first row.
    Otherwise we can repeat the reasoning until we find $i_h$ such that
    $i_h+2\not\in\I$ and we conclude.
    It remains to consider the cases $(i_1,i_2)=(1,2)$ and $(i_1,i_2)=(n-1,n)$, that can be treated similarly,
    by expanding along the first row and remembering that $x_{12}$ and $x_{n-1,n}$ are the smallest indeterminates in $D\cup U$.

    \smallskip

    Consider now the terms in $\sum_{h=1}^{p} \pm [i_1 \,|\,b_h]N_h.$ The only terms to be considered are the ones with $x_{i_1 \, b_h}\in D\cup U$, that is
    $$x_{i_1 \, b_h}\in\{ x_{12},x_{21}, x_{n-1\, n}, x_{n\, n-1}, x_{k,\, k+2}, x_{k+2,\, k}, \text{ for some }k\in\{1,\ldots,n-2\} \}.$$
    Let start with $x_{i_1 \, b_h}=x_{n-1\, n}$, that is $i_1=n-1$ and $b_h=n$. In particular $b_h=b_p$, then $i_k\not=n$ for every $k$, thus $j=1$ and 
    $M=[n-1,a_1,\ldots,a_p\,|\,n-1,b_1,\ldots,b_{p-1},n].$
    By developing $M$ along its first row one has: $$[n-1\,|\, n-1][a_1,\ldots,a_p\,|\, b_1,\ldots,b_{p-1},n]
    \pm [n-1\,|\,n][a_1,\ldots,a_p\,|\,n-1,b_1,\ldots,b_{p-1}]+ \textrm{ other terms }$$
    all containing a variables not in $D\cup U$; thus the conclusion follows by induction and by the fact that $x_{n-1,n}$ is the smallest variables in $D\cup U$.

    Similarly one concludes in the case $x_{i_1 \, b_h}=x_{n\, n-1}$.

    Consider now $x_{i_1 \, b_h}=x_{12}$, that is $i_1=1, b_h=b_1=2$.
    In particular $\a_1=2$ and $a_1=4$. Note that $x_{13}$ does not appear in the first row of $M$ on the right of $x_{12}$,
    otherwise it would be $\a_t+2=3$ for some $t$ and we would have $\a_t=1=i_1$, that contradicts the hypothesis.
    Now if $i_2\not=3$, the conclusion follows by developing $M$ along the first row and noting that $x_{12}$ is the
    smallest indeterminate in $D\cup U\setminus \{x_{n-1\,n}\}$. If $i_2=3$, developing $M$ along the first row we have to
    consider the term $T=x_{13}[3,i_3,\ldots,i_j,a_1,\ldots,a_p\,|\, 1,i_3, \ldots,i_j,b_1,\ldots,b_p]$. If $i_3\not=5$
    the leading term of $T$ is divided by $x_{13}^2$, thus it is smaller than the term we want to show to be the leading one;
    so we are done. If $i_3=5$ we go on expanding until we find $i_h\not=h+2$ and we conclude in the same way.

    Suppose $x_{i_1 \, b_h}=x_{k,\, k+2}$, for some $k$, that is, $b_h=i_1+2$; $b_h\not\in\{\b_1,\ldots,\b_p\}$,
    otherwise it would be $i_1\in\{\a_1,\ldots,\a_p\}$, that contradicts the hypothesis. Thus we are in one of the Cases 2,3,4 of   
    \ref{le:initial1}. There are then only two possibilities: $(a_1,b_1)$ is equal to $(1,2)$ or
    to $(2,1)$, thus $\{1,2,\ldots,i_1\}\subseteq \{\a_i,\b_i\,|\, i=1,\ldots, p\}$ which contradicts the hypothesis.
    Or $(a_1,b_1)$ is equal to $(n-1,n)$ or to $(n,n-1)$ which leads to a similar contradiction. Thus this cannot occur.
  Analogously, one proves that it cannot be $x_{i_1 \, b_h}=x_{2\,1}$.

    If $x_{i_1 \, b_h}=x_{k+2,\, k}$ for some $k=1,\ldots,n-2$, that is $b_h=i_1-2$, then $b_h\in\{\b_1,\ldots,\b_p\}.$ In fact
    $b_h\in\{\a_1,\ldots,\a_p\}$ would imply $i_1\in \{\b_1,\ldots,\b_p\}$, that contradicts the hypothesis.
    One concludes also in this case arguing as in the case $x_{i_1 \, b_h}=x_{12}$, and this concludes the proof.
  \end{proof}

\section{The initial complex}
\label{r-covered}

 We introduce some notions that will be used to describe the simplicial complex associated to $\iin_\prec (I_{n-2})$.
Denote by $C(m,d)$  the boundary complex of the $d$-dimensional cyclic polytope with $m$ vertices.
Recall that, by Gale's evenness condition, the facets of 
 $C(m,d)$  are  the subsets $M \subseteq [m]$ of size $d$ such that for any
  two $i,j \in [m] \setminus M$ with $i < j$ the number of elements $\ell \in M$ 
  for which $i < \ell < j$ is even (see \cite[Theorem 0.7]{Ziegler}).

 Moreover let $C_m = ([m],E)$ be the $m$-cycle graph on vertex set $[m] = \{1,\ldots, m\}$
  and edge set $E = \{ \{i,i+1\} | 1 \leq i \leq m-1\} \cup \{\{1,m\}\}$.
  For some $r < \frac{m}{2}$ consider the simplicial complex $\cM_{m,r}$ consisting of all subsets $M$ 
  of $[m]$ such that the vertices in $M$ are covered by a (partial) matching of $C_m$ of size $r$.  

  \begin{Lemma}
     \label{le:cyclic}
     The simplicial complex $\cM_{m,r}$ is the boundary complex of the cyclic polytope
     $C(m,2r)$.
     In particular, $\cM_{m,r}$ is
     pure of dimension $2r-1$ with
     ${m-r \choose r}+{m-r-1 \choose r-1}$ facets.
  \end{Lemma}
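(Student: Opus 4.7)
The plan is to identify the facets of $\cM_{m,r}$ with those of $C(m,2r)$ as characterized by Gale's evenness condition. First I would note that any face $M$ of $\cM_{m,r}$ satisfies $|M|\le 2r$ (since an $r$-matching covers only $2r$ vertices), and if $M$ is contained in the vertex set of the $r$-matching $E_1\sqcup\cdots\sqcup E_r$, then so is the entire $2r$-element set $V(E_1)\cup\cdots\cup V(E_r)$. Hence $\cM_{m,r}$ is pure of dimension $2r-1$, and its facets are exactly the subsets $M\subseteq[m]$ of size $2r$ that decompose as a disjoint union of $r$ edges of $C_m$.

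Next, fix such an $M$ of size $2r$ and write $[m]\setminus M=\{j_1<\cdots<j_{m-2r}\}$. Since the interval $(j_k,j_{k+1})$ contains no complement elements, it lies entirely in $M$, so $M$ decomposes into its prefix run $[1,j_1-1]$, its gap runs $(j_k,j_{k+1})$ for $k=1,\ldots,m-2r-1$, and its suffix run $[j_{m-2r}+1,m]$. Gale's evenness condition is precisely the statement that each gap run has even length, in which case the gap runs automatically split into dominoes $\{a,a+1\}$. Since $|M|=2r$ is even and the gap runs have even length, the prefix and suffix runs have matching parity. If both are even, then $M$ decomposes into dominoes using only non-wrap edges of $C_m$. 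If both are odd, then $1,m\in M$, and removing the wrap-around edge $\{1,m\}$ leaves a set whose prefix and suffix runs are now of even length, so the remainder splits into $r-1$ dominoes. Conversely, any disjoint union of $r$ edges of $C_m$ visibly satisfies Gale's condition by the reverse of this analysis, so the facets of $\cM_{m,r}$ coincide with the facets of $C(m,2r)$, and the two simplicial complexes agree.

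The main delicate point is precisely the wrap-around edge $\{1,m\}$, which lies outside the natural linear order used by Gale's condition; the parity bookkeeping on prefix and suffix runs is what lets us handle this cleanly. For the facet count, I would invoke the classical enumeration of $r$-matchings in the cycle $C_m$, namely $\frac{m}{m-r}\binom{m-r}{r}$, and then verify the elementary identity
$$\frac{m}{m-r}\binom{m-r}{r}=\binom{m-r}{r}+\binom{m-r-1}{r-1}$$
via $\frac{r}{m-r}\binom{m-r}{r}=\binom{m-r-1}{r-1}$, which yields the stated formula.
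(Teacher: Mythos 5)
Your proof is correct and takes essentially the same route as the paper: both sides of the identification are handled via Gale's evenness condition, decomposing $M$ into contiguous runs (the paper's ``initial and final segment'' plus the interior blocks between consecutive complement elements are exactly your prefix, gap, and suffix runs) and using parity bookkeeping to handle the wrap-around edge $\{1,m\}$. Your purity argument is also the right one, just made more explicit than in the paper. The only genuine difference is the facet count: you count $r$-matchings of $C_m$ via the classical formula $\frac{m}{m-r}\binom{m-r}{r}$ and then verify the binomial identity, whereas the paper simply appeals to the standard $f$-vector of $C(m,2r)$ once the isomorphism is established. Both routes are fine; if you keep yours, you should add the (easy) observation that for $r<\frac{m}{2}$ a size-$2r$ vertex set is covered by a \emph{unique} $r$-matching of $C_m$, so the number of $r$-matchings really does equal the number of facets --- this is the point where your phrase ``decompose as a disjoint union of $r$ edges'' is doing silent work.
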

  \begin{proof}
    First we note that by definition $\cM_{m,r}$ is a pure simplicial complex and that
    the boundary complex of $C(m,2r)$ is pure as a boundary complex of a simplicial polytope.
    We use Gale's evenness condition. By definition the facets of 
    $\cM_{m,r}$ are given by the vertex set of a matching of size $r$ in
    $C_m$. If $M$ is such a set then for any two $i,j \in [m] \setminus M$
    such that $i < j$ the number $i < \ell < j$ of elements $\ell \in M$ that 
    lie between them must be even as they are exactly the elements covered by
    a set of disjoint edges. Thus by Gale's evenness condition it follows that 
    $M$ lies in $C(m,2r)$. Conversely, if $M$ is a facet of the boundary complex of
    $C(m,2r)$ then between any two $i, j \in [n] \setminus M$ where $i < j$ the number
    of $i < \ell < j$ is even. Thus by choosing $i$ and $j$ with the property that 
    $\{ \ell | j <  \ell < j \} \subseteq M$ one sees that by a partial matching of $C_m$ 
    one can cover all elements $\ell$ of $M$
    for which there are $i,j \in [m] \setminus M$ such that $i < \ell < j$. 
    Since in $M$ we have 
    $2r$ vertices of which an even number is covered, an even number is left.
    Those remaining vertices are an initial and final segment of $[m]$ and therefore
    can be covered by another few edges of $C_m$ that form a partial matching.
    Thus $M \in \cM_{m,r}$. 

    The rest of the claim now follows by standard facts about cyclic polytopes.
  \end{proof}
  
  The following lemma is certainly known, but we include it      
  for the sake of completeness.

  \begin{Lemma}
   \label{le:nonface}
   Let $r < \frac{m}{2}$.
   The minimal nonfaces of $\cM_{m,r}$ are the subsets $N$
   of $[m]$ such that
   \begin{itemize}
   \item[(i)] the cardinality of $N$ is $r+1$ and
   \item[(ii)] the set $N$ does not contain any edge of $C_m$.
   \end{itemize}
  \end{Lemma}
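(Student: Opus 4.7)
I would verify two containments. For the first, suppose $N$ has $|N|=r+1$ and contains no edge of $C_m$. If $N \subseteq V(M)$ for a matching $M$ of $C_m$ of size $r$, the $r+1$ vertices of $N$ are distributed among the $r$ edges of $M$ (each of size $2$), so by pigeonhole some edge of $M$ has both endpoints in $N$, contradicting (ii). Hence $N$ is a nonface. For minimality, any proper subset $N' \subsetneq N$ has $|N'|\leq r$ and is independent in $C_m$; the edges $\{\{v,v+1\bmod m\} : v \in N'\}$ are pairwise disjoint (else two of them would share a vertex, contradicting independence of $N'$) and form a size-$|N'|$ matching of $C_m$ covering $N'$. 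Padding with $r-|N'|$ further disjoint edges drawn from the $m-2|N'|$ remaining vertices of $C_m$ is possible because $m>2r$, provided one allows some freedom in orienting the initial pairs; this yields a facet containing $N'$.

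For the reverse containment, I would establish three sub-claims: (a) every subset of $[m]$ of size $\leq r$ is a face; (b) every size-$(r+1)$ subset containing a $C_m$-edge is a face; (c) every nonface of size $\geq r+2$ properly contains another nonface. These together force a minimal nonface to have size exactly $r+1$ and to contain no edge of $C_m$. For (a) I decompose $N'$ into its maximal arcs of consecutive integers in $C_m$ and cover each arc of length $\ell$ with $\lfloor \ell/2 \rfloor$ internal edges, plus one additional edge extending into an adjacent gap when $\ell$ is odd; this uses $\alpha_{C_m}(N')\leq |N'|\leq r$ edges and is padded to exactly $r$. Claim (b) reduces to (a) by dedicating $\{i,i+1\}$ to one edge of the matching and covering the residual $r-1$ vertices inside the path $C_m\setminus\{i,i+1\}$ by the analogous construction. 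For (c), the arc construction shows any $N$ with $\alpha_{C_m}(N)\leq r$ is a face, since $|N|\leq 2\alpha_{C_m}(N)\leq 2r$ (the case $N=[m]$ is excluded under $r<m/2$, as that would force $\alpha_{C_m}([m])\geq r+1$); consequently a nonface of size $\geq r+2$ satisfies $\alpha_{C_m}(N)\geq r+1$, and the independent $(r+1)$-subset it contains is a strictly smaller nonface by the first direction.

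The main obstacle is the construction underlying (a) (and equivalently the padding step of the first direction). When several odd arcs of $N'$ are separated by gaps of length exactly $1$, the extension edges they require compete for the single gap vertex in between, so the orientations (left vs.\ right) of all odd-arc extensions must be chosen simultaneously and compatibly. I plan to handle this by observing that a chain of odd arcs linked by length-$1$ gaps cannot wrap cyclically around $C_m$ under the hypothesis $r<m/2$: a cyclic chain would force the total $|N'|$ together with the extra edges into $m$ with no slack, violating $m>2r$. Every such chain is therefore bounded either by a gap of length $\geq 2$ or by a non-odd arc, which furnishes a free escape vertex; the remaining orientations can be propagated inward along the chain from this escape, eliminating all conflicts.
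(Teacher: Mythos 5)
Your first inclusion is essentially the paper's: pigeonhole shows $N$ is a nonface, and the pairing $\{w,w+1\}$ covers $N\setminus\{v\}$. Note, though, that for minimality you only need to check the $r$-element subsets $N\setminus\{v\}$, and there the pairing already yields $r$ disjoint edges, so the padding discussion (and the ``orientation freedom'') is not needed in that direction at all. For the reverse inclusion you take a genuinely different route. The paper walks around $C_m$ from a vertex $v\notin N$ and colours vertices red/blue/green so that the red subset $N'\subseteq N$ is independent; if $|N'|\geq r+1$ then $N$ contains, hence equals, a minimal nonface of the claimed form, and if $|N'|\leq r$ the red--successor pairs already cover $N$. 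You instead characterise faces via (a) all sets of size $\leq r$, (b) all $(r+1)$-sets containing an edge, and (c) ``\,$\alpha_{C_m}(N)\leq r$ implies $N$ is a face,'' all driven by an arc decomposition and an orientation-resolution step. Both organisations work; the paper's colouring is slicker and bypasses the explicit arc bookkeeping, while your version isolates the face structure more transparently at the cost of having the arc/orientation argument do the heavy lifting in all three subclaims.

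Two concrete points. First, the parenthetical in (c) is wrong: for $m=2r+1$ one has $\alpha_{C_m}([m])=\lfloor m/2\rfloor=r$, not $\geq r+1$. In fact for $m=2r+1$ the lemma as stated fails --- $[m]$ itself is then a minimal nonface of size $2r+1\neq r+1$ (each $[m]\setminus\{v\}$ is a facet, being the vertex set of a perfect matching of the path $C_m-v$), while $C_{2r+1}$ has no independent set of size $r+1$. The paper's proof contains the same unjustified assertion (``$[m]$ is not a minimal nonface''); both arguments are fine once one assumes $m\geq 2r+2$, which holds in the paper's application $m=2n$, $r=n-3$, but your stated justification is incorrect as written and should be replaced. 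Second, the escape-vertex propagation resolving competing odd-arc extensions is the real content behind (a)--(c) (it is also implicitly needed to extend a matching of size $<r$ to one of size exactly $r$), and you have only sketched it; since you correctly identify this as the main obstacle, you should carry it out explicitly rather than assert that the propagation ``eliminates all conflicts.''
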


  \begin{proof}
   We show first show that each $N$ satisfying (i) and (ii)
   is a minimal nonface.
   Let $N \subseteq [m]$ be a set satisfying (i) and (ii).

   The set $N$ is a face of $\cM_{m,r}$
   if and only if we can find $r$ edges that cover $N$. But $N$ is of size
   $r+1$ and does not contain any edge. Hence $N$ cannot be covered by $r$
   edges and $N$
   is a nonface. Now let $v \in N$ be some vertex. Then $N\setminus \{v\}$
   contains $r$ elements. No two elements of $N \setminus \{v\}$ lie in an edge.
   Starting from any $w \in N \setminus \{v\}$ we go around $C_{2n}$
   in a fixed order.
   We pair each element of $N\setminus \{v\}$ with its neighbor in this
   order. Since no neighbor is in $N$ this will give $r$ edges covering
   $N \setminus \{v\}$. Hence $N\setminus \{v\}$ is a face.
   In particular $N$ is a minimal nonface.

   Now it remains to be shown that any minimal nonface $N$ of $\cM_{m,r}$
   satisfies (i) and (ii).
   Let $N$ be any minimal nonface of $\cM_{m,r}$. By $r < \frac{m}{2}$
   the full ground set $[m]$ is not a minimal nonface. Hence,
   there is a vertex $v$ that is
   not contained in $N$. Starting from $v$ we go in a fixed direction
   around $C_{m}$.
   We mark a vertex red if it is in $N$ and the preceding vertex is not
   yet marked red. We mark a vertex blue if it is in $N$ and the preceding
   vertex is marked red. We mark a vertex green if it is not in $N$ but
   the preceding vertex is marked red.
   It follows that $N$ consists of all red and blue
   vertices. Now remove from $N$ all blue vertices.
   Then the resulting set $N'$ does not contain any edge. Thus if
   $N'$ has $r+1$ or more elements then it contains a subset
   satisfying (i) and (ii). Since we know that all subsets satisfying (i) and
   (ii) are minimal nonfaces, it follows that $N$ itself must satisfy (i)
   and (ii).
   Hence we are left with the situation when $N'$ contains strictly
   less than $r+1$
   vertices. But by construction the vertex following a red vertex is
   either blue or green. Hence the set of  red, blue and green
   vertices is a set containing $N$ and being contained in a
   matching of size $r$. Thus $N$ cannot be a nonface.
  \end{proof}

 For exhibiting the connection of the previous lemmas with $\iin_\prec (I_{n-2})$, we consider a graph on vertex 
  set $D\cup U$, with 
  $$D = \{x_{11}, \ldots, x_{nn}\},\  \  U=\{ x_{13},x_{24}, \ldots, x_{n-2,n},x_{12},x_{n-1,n}\}.$$
  The edges are formed by the two elements subsets that contain one 
  element $x_{ii}$ in $D$ and one element in $U$ that lies either in the same
  row or column as $x_{ii}$ . One easily sees that this graph is a $2n$-cycle
  whose vertices alternate between elements in $D$ and elements in $U$ (see Figure 1).
    The preceding lemmas for $m = 2n$ and $r = n-3$ imply the
  following proposition.

  \begin{Proposition}
    \label{pr:initial2}
    The ideal $\iin_{\prec}(I_{n-2})$ is the Stanley-Reisner ideal of a simplicial complex
    isomorphic to an iterated cone over $\cM_{2n,n-3}$, resp. the boundary complex of $C(2n,2n-6)$.
  \end{Proposition}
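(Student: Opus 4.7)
The plan is to identify the Stanley--Reisner ideal $I_\Delta$ of the claimed complex with $\iin_{\prec}(I_{n-2})$ by combining a one-sided containment coming from Proposition \ref{pr:initmon} with a Hilbert-series comparison forced by Proposition \ref{betti} and Lemma \ref{le:cyclic}. Let $\Delta$ denote the simplicial complex on the full variable set whose faces are exactly the $F$ with $F\cap(D\cup U)$ a face of $\mathcal{M}_{2n,n-3}$, so that every variable outside $D\cup U$ is a cone point and $D\cup U$ is identified with the vertex set of $C_{2n}$ via the adjacencies described before the proposition.

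First I would prove the inclusion $I_\Delta\subseteq\iin_{\prec}(I_{n-2})$. Unwinding definitions, an edge of $C_{2n}$ is precisely a pair $\{x_{ii},x_{hk}\}$ with $x_{hk}\in U$ and $i\in\{h,k\}$, so a subset of $D\cup U$ is edge-free in $C_{2n}$ if and only if its product of variables is a squarefree monomial satisfying the ``no index conflict'' hypothesis of Proposition \ref{pr:initmon}. Applying Lemma \ref{le:nonface} with $m=2n$ and $r=n-3$ shows that the minimal nonfaces of $\mathcal{M}_{2n,n-3}$, and hence of $\Delta$, are exactly the $(n-2)$-subsets of $D\cup U$ that contain no edge of $C_{2n}$; the corresponding squarefree monomials generate $I_\Delta$. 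Proposition \ref{pr:initmon} guarantees that each such monomial is the leading term of an $(n-2)$-minor of $X$, so each lies in $\iin_{\prec}(I_{n-2})$, yielding $I_\Delta\subseteq\iin_{\prec}(I_{n-2})$.

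To upgrade the inclusion to equality I would compare Hilbert series. On one hand, passage to an initial ideal preserves Hilbert functions, so $S/\iin_{\prec}(I_{n-2})$ has the $h$-polynomial of Proposition \ref{betti}, namely $\sum_{i=0}^{n-3}\binom{5+i}{5}z^{i}+\sum_{i=0}^{n-4}\binom{5+i}{5}z^{2(n-3)-i}$. On the other hand, by Lemma \ref{le:cyclic} the complex $\mathcal{M}_{2n,n-3}$ is the boundary of the cyclic polytope $C(2n,2n-6)$, whose $h$-vector is the classical $h_i=\binom{5+i}{5}$ for $0\le i\le n-3$, extended symmetrically by Dehn--Sommerville. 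Iterated coning by the $\binom{n+1}{2}-2n=\tfrac{n(n-3)}{2}$ variables outside $D\cup U$ preserves the $h$-polynomial and raises the Krull dimension to $(2n-7)+\tfrac{n(n-3)}{2}+1=\tfrac{(n+4)(n-3)}{2}=\dim S/I_{n-2}$, so $S/I_\Delta$ and $S/\iin_{\prec}(I_{n-2})$ have identical Hilbert series. Together with the inclusion from Step 1, this forces $I_\Delta=\iin_{\prec}(I_{n-2})$.

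The conceptual crux is the combinatorial translation between the ``index conflict'' condition of Proposition \ref{pr:initmon} and the ``no edge of $C_{2n}$'' condition of Lemma \ref{le:nonface}, but this is essentially tautological once the cycle $C_{2n}$ has been built on $D\cup U$ with the right adjacencies; everything else is tracking dimensions, multiplicities, and $h$-vectors. The only minor bookkeeping is to confirm that $\binom{5+i}{5}$ really does arise as both the cyclic polytope $h$-vector and the coefficient in Proposition \ref{betti}, which is immediate.
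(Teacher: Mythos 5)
Your proof is correct and follows the same overall architecture as the paper's: (a) translate the ``no index conflict'' hypothesis of \ref{pr:initmon} into the ``edge-free in $C_{2n}$'' condition of \ref{le:nonface}, deduce the containment $I_\Delta\subseteq\iin_\prec(I_{n-2})$, and then (b) force equality by a numerical comparison. The one genuine difference is in step (b). The paper compares only Krull dimension and multiplicity and then invokes \ref{monomialinclusion}, which additionally requires that the smaller ideal be the Stanley--Reisner ideal of a \emph{pure} complex (this is where \ref{le:cyclic}'s purity claim earns its keep). You instead compare the full Hilbert series: $\iin_\prec(I_{n-2})$ inherits the $h$-polynomial of $S/I_{n-2}$ from \ref{betti}, the cyclic polytope $C(2n,2n-6)$ has $h_i=\binom{5+i}{5}$ by the standard formula, coning by the $\tfrac{n(n-3)}{2}$ remaining variables preserves the $h$-polynomial while raising the dimension to $\tfrac{(n+4)(n-3)}{2}$, and a one-sided inclusion of homogeneous ideals with equal Hilbert series is an equality. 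Your route needs the explicit $h$-vector of the cyclic polytope rather than just its facet count, but in exchange it sidesteps \ref{monomialinclusion} and its purity hypothesis entirely, and the final implication ``inclusion plus equal Hilbert series forces equality'' is more elementary than the equidimensionality argument hidden inside that lemma. Both are sound; yours is a mild streamlining of the same idea.
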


  For the proof of the proposition we need a simple lemma that can for example
  be found in \cite[Lemma 5.1]{JonssonWelker2007}.  A version of this  for  arbitrary (not necessarily monomial)
  ideals is stated in \cite[Lemma 4.2]{ConcaHostenThomas2006}.

  \begin{Lemma} \label{monomialinclusion}
   Let $T = k[y_1, \ldots, y_\ell]$ be the polynomial ring in $\ell$
   variables. Suppose that $I \subseteq J$ are monomial  ideals in $T$
   such that the following hold:
   \begin{itemize}
    \item[(i)] $\dim(T/I) = \dim(T/J)$.
    \item[(ii)] $e(T/I) = e(T/J)$.
    \item[(iii)] $I$ is the Stanley-Reisner ideal of a pure simplicial complex $\Delta$
          on ground set $[\ell]$.
   \end{itemize}

   Then $I = J$.
  \end{Lemma}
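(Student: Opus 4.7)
My plan is to combine the short exact sequence $0 \to J/I \to T/I \to T/J \to 0$ with the explicit primary decomposition of $I$. I would begin by recording that since $I=I_\Delta$ is the Stanley--Reisner ideal of a pure simplicial complex $\Delta$ of dimension $d = \dim(T/I)-1$, it is radical and admits the minimal primary decomposition
\[
  I \;=\; \bigcap_{F \text{ facet of } \Delta} P_F, \qquad P_F = (y_i \,:\, i \notin F),
\]
where every $P_F$ has the same height $\ell-d-1$. In particular, all minimal primes of $I$ are top-dimensional, and their number equals $e(T/I)$.

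The decisive step is to use hypotheses (i) and (ii) to control the kernel $J/I$. Additivity of multiplicity in top dimension applied to the short exact sequence gives $e(T/I)=e(T/J)+e(J/I)$ whenever $\dim(J/I)=\dim(T/I)$; since $e(T/I)=e(T/J)$, this forces $\dim(J/I) < d+1$. Consequently $(J/I)_P = 0$ for every top-dimensional minimal prime $P=P_F$ of $I$, i.e.\ $JT_P = IT_P$. The primes $P_{F'}$ associated to other facets $F' \ne F$ are pairwise incomparable with $P_F$, so $P_{F'}T_{P_F} = T_{P_F}$ and hence $IT_{P_F} = P_F T_{P_F}$. Combining these equalities, $JT_{P_F} = P_F T_{P_F} \ne T_{P_F}$, which forces $J \subseteq P_F$ (any generator of $J$ outside $P_F$ would become a unit in $T_{P_F}$, yielding $JT_{P_F}=T_{P_F}$).

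Taking intersections over all facets of $\Delta$ then yields
\[
  J \;\subseteq\; \bigcap_{F \text{ facet of } \Delta} P_F \;=\; I,
\]
and coupled with the hypothesis $I \subseteq J$ this gives $I=J$. I do not foresee a serious obstacle: the role of assumption (iii), and of purity in particular, is precisely to guarantee that $I$ is recovered as the intersection of its top-dimensional minimal primes, which is exactly what makes the localization argument in the previous step effective.
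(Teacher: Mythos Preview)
Your proof is correct. The paper does not actually supply its own argument for this lemma: it only cites \cite[Lemma~5.1]{JonssonWelker2007} and remarks that a version for non-monomial ideals appears in \cite[Lemma~4.2]{ConcaHostenThomas2006}. Your localization argument---using additivity of multiplicity on the exact sequence $0\to J/I\to T/I\to T/J\to 0$ to force $(J/I)_{P_F}=0$ at every top-dimensional minimal prime $P_F$, and then recovering $J\subseteq\bigcap_F P_F=I$ from the purity of $\Delta$---is a clean and standard way to establish the result and would serve perfectly well as the missing proof.
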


  \begin{proof}[Proof of \ref{pr:initial2}]
   Let us identify the variables in $D \cup U$ with the elements of
   $[2n]$ as indicated in \ref{fig:figure1}.
   Then \ref{pr:initmon} implies that the monomials whose support sets
   are satisfy the conditions of \ref{le:nonface} lie in
   $\iin_\prec (I_{n-2})$. Thus \ref{le:nonface} implies that
   the Stanley-Reisner ideal of $\cM_{2n,n-3}$ is a subset of
   $\iin_\prec (I_{n-2})$.
   The dimensions of the respective quotient rings and their
   multiplicities coincide by \ref{le:cyclic} and \ref{co:n-2}.
   Hence using \ref{monomialinclusion}
   it follows that $\iin_\prec (I_{n-2})$ equals the
   Stanley-Reisner ideal of $\cM_{2n,n-3} \cong C(2n,2n-6)$.
  \end{proof}

\begin{figure}
 \begin{picture}(0,0)%
  \hskip2.5cm\includegraphics[width=0.6\textwidth]{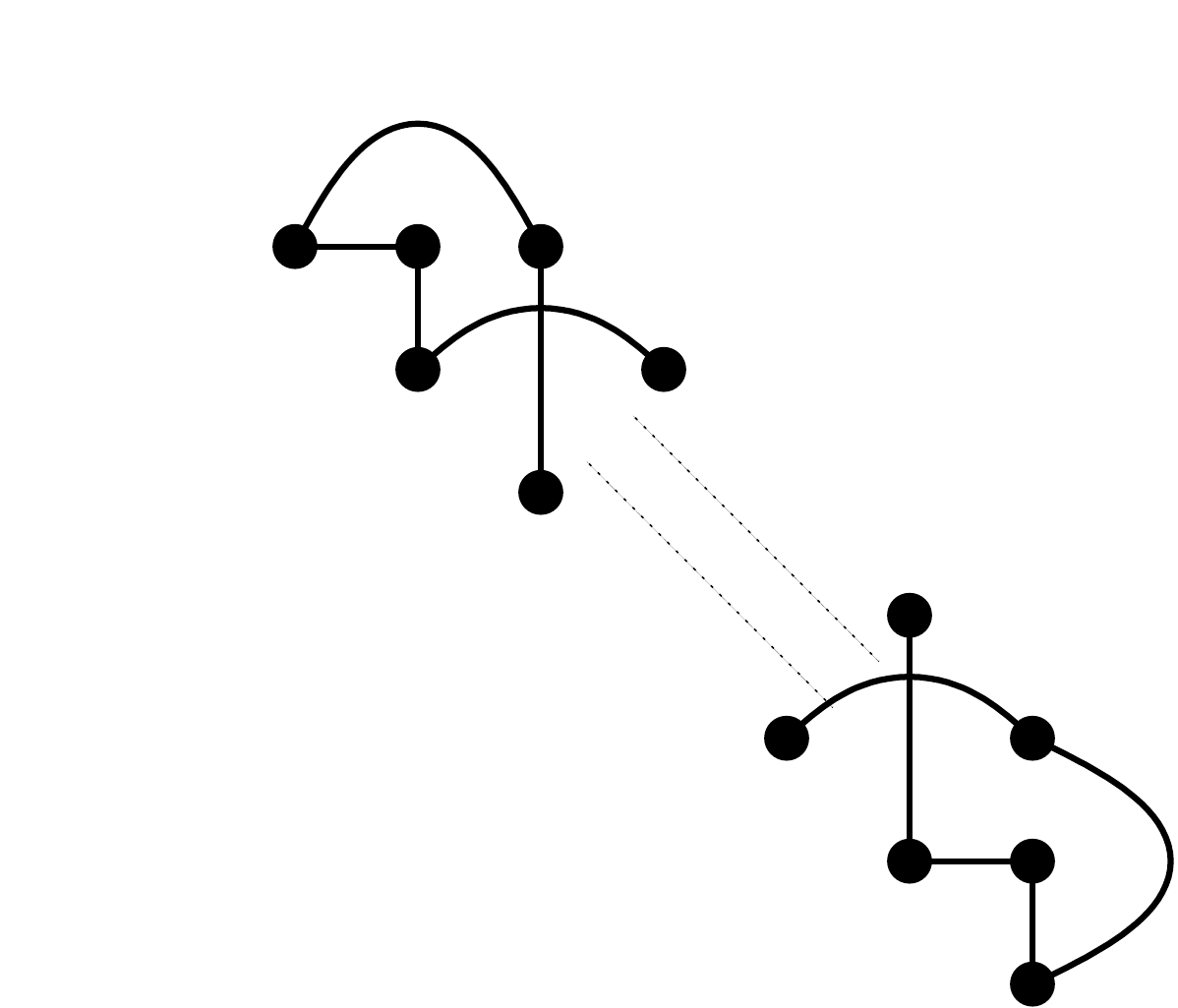}%
 \end{picture}%
 \setlength{\unitlength}{3947sp}%
 \begingroup\makeatletter\ifx\SetFigFont\undefined%
 \gdef\SetFigFont#1#2#3#4#5{%
  \reset@font\fontsize{#1}{#2pt}%
  \fontfamily{#3}\fontseries{#4}\fontshape{#5}%
  \selectfont}%
  \fi\endgroup%
 \begin{picture}(5748,4913)(961,-4074)
  \put(2600,-4000){\makebox(0,0)[lb]{\smash{{\SetFigFont{12}{14.4}{\rmdefault}{\mddefault}{\updefault}{\color[rgb]{0,0,0}$n$}}}}}
  \put(2600,-3600){\makebox(0,0)[lb]{\smash{{\SetFigFont{12}{14.4}{\rmdefault}{\mddefault}{\updefault}{\color[rgb]{0,0,0}$n-1$}}}}}
  \put(2600,-3200){\makebox(0,0)[lb]{\smash{{\SetFigFont{12}{14.4}{\rmdefault}{\mddefault}{\updefault}{\color[rgb]{0,0,0}$n-2$}}}}}
  \put(2600,-2150){\makebox(0,0)[lb]{\smash{{\SetFigFont{12}{14.4}{\rmdefault}{\mddefault}{\updefault}{\color[rgb]{0,0,0}$3$}}}}}
  \put(2600,-1650){\makebox(0,0)[lb]{\smash{{\SetFigFont{12}{14.4}{\rmdefault}{\mddefault}{\updefault}{\color[rgb]{0,0,0}$2$}}}}}
  \put(2600,-1200){\makebox(0,0)[lb]{\smash{{\SetFigFont{12}{14.4}{\rmdefault}{\mddefault}{\updefault}{\color[rgb]{0,0,0}$1$}}}}}

  \put(3200,-900){\makebox(0,0)[lb]{\smash{{\SetFigFont{12}{14.4}{\rmdefault}{\mddefault}{\updefault}{\color[rgb]{0,0,0}$1$}}}}}
  \put(3700,-900){\makebox(0,0)[lb]{\smash{{\SetFigFont{12}{14.4}{\rmdefault}{\mddefault}{\updefault}{\color[rgb]{0,0,0}$2$}}}}}
  \put(4200,-900){\makebox(0,0)[lb]{\smash{{\SetFigFont{12}{14.4}{\rmdefault}{\mddefault}{\updefault}{\color[rgb]{0,0,0}$3$}}}}}
  \put(5000,-900){\makebox(0,0)[lb]{\smash{{\SetFigFont{12}{14.4}{\rmdefault}{\mddefault}{\updefault}{\color[rgb]{0,0,0}$n-2$}}}}}
  \put(5500,-900){\makebox(0,0)[lb]{\smash{{\SetFigFont{12}{14.4}{\rmdefault}{\mddefault}{\updefault}{\color[rgb]{0,0,0}$n-1$}}}}}
  \put(6150,-900){\makebox(0,0)[lb]{\smash{{\SetFigFont{12}{14.4}{\rmdefault}{\mddefault}{\updefault}{\color[rgb]{0,0,0}$n$}}}}}

  \put(3350,-1300){\makebox(0,0)[lb]{\smash{{\SetFigFont{8}{10.0}{\rmdefault}{\mddefault}{\updefault}{\color{blue}$1$}}}}}
  \put(3800,-1300){\makebox(0,0)[lb]{\smash{{\SetFigFont{8}{10.0}{\rmdefault}{\mddefault}{\updefault}{\color{blue}$2$}}}}}
  \put(4050,-1300){\makebox(0,0)[lb]{\smash{{\SetFigFont{8}{10.0}{\rmdefault}{\mddefault}{\updefault}{\color{blue}$2n$}}}}}
  \put(3800,-1800){\makebox(0,0)[lb]{\smash{{\SetFigFont{8}{10.0}{\rmdefault}{\mddefault}{\updefault}{\color{blue}$3$}}}}}
  \put(4750,-1800){\makebox(0,0)[lb]{\smash{{\SetFigFont{8}{10.0}{\rmdefault}{\mddefault}{\updefault}{\color{blue}$4$}}}}}

  \put(3750,-2200){\makebox(0,0)[lb]{\smash{{\SetFigFont{8}{10.0}{\rmdefault}{\mddefault}{\updefault}{\color{blue}$2n-1$}}}}}
  \put(6250,-4150){\makebox(0,0)[lb]{\smash{{\SetFigFont{8}{10.0}{\rmdefault}{\mddefault}{\updefault}{\color{blue}$n+1$}}}}}
  \put(6250,-3650){\makebox(0,0)[lb]{\smash{{\SetFigFont{8}{10.0}{\rmdefault}{\mddefault}{\updefault}{\color{blue}$n+2$}}}}}
  \put(5250,-3650){\makebox(0,0)[lb]{\smash{{\SetFigFont{8}{10.0}{\rmdefault}{\mddefault}{\updefault}{\color{blue}$n+3$}}}}}
  \put(6050,-3200){\makebox(0,0)[lb]{\smash{{\SetFigFont{8}{10.0}{\rmdefault}{\mddefault}{\updefault}{\color{blue}$n$}}}}}
  \put(4800,-3200){\makebox(0,0)[lb]{\smash{{\SetFigFont{8}{10.0}{\rmdefault}{\mddefault}{\updefault}{\color{blue}$n-1$}}}}}
  \put(5800,-2650){\makebox(0,0)[lb]{\smash{{\SetFigFont{8}{10.0}{\rmdefault}{\mddefault}{\updefault}{\color{blue}$n+4$}}}}}
\end{picture}
\caption{Labeling of the cycle for $n$ even}
\label{fig:figure1}
\end{figure}

 The following is an immediate consequence of the well known fact
 that the Stanley-Reisner ideal of the boundary complex of a simplicial polytope
  is Gorenstein (see for example \cite[Corollary 5.5.6]{BrunsHerzog}) .
   
 \begin{Corollary}
  The simplicial complex $\cM_{m,r}$ is Gorenstein$^\ast$ for $r < \frac{m}{2}$.
  In particular, $\iin_\prec (I_{n-2})$ defines  a Gorenstein ring.
 \end{Corollary}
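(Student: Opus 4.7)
The plan is to observe that both claims follow essentially formally from results already established in the excerpt, together with one well-known fact about Stanley--Reisner rings of simplicial polytopes.

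First I would handle the statement that $\cM_{m,r}$ is Gorenstein$^\ast$ when $r<\frac{m}{2}$. By \ref{le:cyclic}, $\cM_{m,r}$ is precisely the boundary complex of the cyclic polytope $C(m,2r)$, which is a simplicial polytope. The Stanley--Reisner ring of the boundary complex of any simplicial polytope is Gorenstein$^\ast$; this is the classical fact cited just above the statement (see e.g.\ \cite[Corollary 5.5.6]{BrunsHerzog}). Applying it to $C(m,2r)$ gives the first assertion with no further work.

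For the second assertion, I would invoke \ref{pr:initial2}, which identifies $\iin_\prec(I_{n-2})$ with the Stanley--Reisner ideal of an iterated cone over $\cM_{2n,n-3}$. Since $n-3<\frac{2n}{2}=n$, the first assertion applies with $m=2n$ and $r=n-3$, so $\cM_{2n,n-3}$ is Gorenstein$^\ast$; equivalently, its Stanley--Reisner ring is Gorenstein. Coning over a vertex corresponds, at the level of Stanley--Reisner rings, to adjoining a polynomial variable, and a polynomial extension of a Gorenstein ring is Gorenstein. Iterating this observation as many times as there are cone points yields that the Stanley--Reisner ring of the iterated cone is Gorenstein, which is exactly the claim that $S/\iin_\prec(I_{n-2})$ is Gorenstein.

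There is no real obstacle: the whole content has already been produced in \ref{le:cyclic} and \ref{pr:initial2}, and the remaining step is just the standard equivalence between the Gorenstein$^\ast$ property of a simplicial complex $\Delta$ and the Gorenstein property of $k[\Delta]$, combined with the stability of Gorensteinness under polynomial extension. The only thing worth writing down explicitly in the proof is the verification that $n-3<n$ so that \ref{le:cyclic} applies with $m=2n$, $r=n-3$.
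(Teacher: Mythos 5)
Your proof is correct and takes exactly the paper's route: identify $\cM_{m,r}$ with the boundary complex of $C(m,2r)$ via \ref{le:cyclic}, invoke the classical fact that boundary complexes of simplicial polytopes have Gorenstein Stanley--Reisner rings, and then pass from $\cM_{2n,n-3}$ to $\iin_\prec(I_{n-2})$ via \ref{pr:initial2} using that coning (a polynomial extension) preserves Gorensteinness. The paper states this even more tersely, but the underlying argument is identical.
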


 Now \ref{main} follows noting that also $S/\iin_\prec (I_{n-2})$ is a 
 compressed Gorenstein $k$-algebras (see \ref{compressed}) with the same numerical 
 invariants as $S/I_{n-2}$, and arguing as before \ref{betti}.

\end{document}